\newtheorem{theorem}{Theorem}[section]
\newtheorem{proposition}[theorem]{Proposition}
\newtheorem{corollary}[theorem]{Corollary}
\theoremstyle{definition}
\newtheorem{definition}[theorem]{Definition}
\newtheorem{example}[theorem]{Example}
\newtheorem{conjecture}[theorem]{Conjecture}
\newtheorem{construction}[theorem]{Construction}
\theoremstyle{remark}
\newtheorem{remark}[theorem]{Remark}
\numberwithin{equation}{section}
\newtheoremstyle{claim}{9pt}{3pt}{}{\parindent}{\bf}{.}{1em}{}
\theoremstyle{claim}
\newtheorem{claim}[equation]{Claim}
\let \cedilla =\c
\newcommand{\nR}{\mathbb{R}}                     
\newcommand{\nQ}{\mathbb{Q}}                     
\newcommand{\nP}{\mathbb{P}}                     
\newcommand{\nA}{\mathbb{A}}                     
\newcommand{\sG}{\mathscr{G}}
\newcommand{\sO}{\mathscr{O}}                    
\newcommand{\sI}{\mathscr{I}}                    
\newcommand{\sP}{\mathscr{P}}
\newcommand{\mf}[1]{\mathfrak{#1}}
\DeclareMathOperator{\Bl}{Bl}                    
\DeclareMathOperator{\bl}{Bl}                    
\DeclareMathOperator{\codim}{codim}              
\DeclareMathOperator{\glct}{glct}                
\DeclareMathOperator{\exc}{exc}            
\DeclareMathOperator{\htt}{ht}                   
\DeclareMathOperator{\Jac}{Jac}                  
\DeclareMathOperator{\lct}{lct}                  
\DeclareMathOperator{\mld}{mld}                  
\DeclareMathOperator{\mldmj}{mld_{MJ}}                
\DeclareMathOperator{\ord}{ord}                  
\DeclareMathOperator{\Spec}{Spec}                
\DeclareMathOperator{\spec}{Spec}                
\DeclareMathOperator{\Sing}{Sing}                
\DeclareMathOperator{\Exc}{Exc}                  
\DeclareMathOperator{\rank}{rank}                
\newcommand*{\verylongrightarrow}{\ensuremath{\joinrel\relbar\joinrel\relbar\joinrel\relbar\joinrel\relbar\joinrel\rightarrow}}
\newcommand*{\superlongrightarrow}{\ensuremath{\joinrel\relbar\joinrel\relbar\joinrel\relbar\joinrel\relbar\joinrel\relbar\joinrel\relbar\joinrel\relbar\joinrel\rightarrow}}
\newcounter{rkcounter}             
\begin{document}

\title{A strongly geometric general residual intersection}

\author{Shihoko Ishii}
\address{Department of Mathematical Sciences, Tokyo Woman's Christian University,
	Tokyo, 167-8585, Tokyo}
\email{shihoko@lab.twcu.ac.jp}

\author{Wenbo Niu}
\address{Department of Mathematical Sciences, University of Arkansas, Fayetteville, AR 72701, USA}
\email{wenboniu@uark.edu}

\subjclass[2010]{13C40, 14M06}

\dedicatory{Dedicated to Professor Lawrence Ein on the occasion of his sixtieth birthday.}

\keywords{linkage, log canonical thresholds, Mather-Jacobian singularities, residual intersection}
\footnotetext{The first author  is partially supported by Grant-In-Aid (c) 1605089 of JSPS.}

\begin{abstract}
In this paper, we give formulas for the Grauert-Riemenschneider canonical sheaf and the log canonical threshold for a general residual intersection, and show that minimal log discrepancies are preserved under a general link. We also find evidences suggesting that MJ-singularities are preserved under a general residual intersection. 
\end{abstract}

\maketitle

%
\section{Introduction}

\noindent The purpose of this paper is to show how general equations can be chosen to produce a residual intersection for a variety and to investigate what properties and invariants of singularities can be preserved under this procedure. The concept of a residual intersection was introduced by Artin-Nagata \cite{Artin:ResidInterCM} in 1972. Its important case, namely linkage or liaison, was systematically studied by Peskine-Szpiro \cite{PeskineSzpiro:LiaisonI} to initiate the modern research of this area. The geometric idea behind the notion of a residual intersection is pretty natural: roughly speaking, any two varieties can be viewed as a residual intersection to each other in their union. When they have the same dimension, they are linked; otherwise, the one of smaller dimension is the residual intersection of another one.

More precisely, let $X$ be a closed subvariety   of a nonsingular variety $A$ of codimension $c$. By choosing $t$ equations from the defining ideal $I_X$ of $X$, we can define a closed subscheme $M\subset A$ containing $X$. Then the closure of $M\setminus X$ can be thought of as a $t$-residual intersection of $X$. Certainly, different choices of these $t$ equations result in different residual intersections. Among all possible residual intersections, it would be natural to expect that the one given by general equations would have the  most typical behavior. This is the idea that motivates our research in this paper.

In the work by Huneke and Ulrich (\cite{HU88}, \cite{Huneke:StrngCMredsiualInter}, \cite{HunekeUlrich:AlgLinkage}, \cite{HunekeUlrich:DivClass}, \cite{HunekeUlrich:SturLinkage}, etc), the rigorous  notions of generic linkage and residual intersection have been established and many fundamental properties have been proved. One of the central problems is to understand how algebraic or geometric properties of a given variety can be preserved under the procedure of generic linkage or residual intersection.  As for singularities, the behavior of rational singularities under residual intersection has been studied by Chardin and  Ulrich \cite{CU:Reg} in connection to questions about Castelnuovo-Mumford regularity. In the second author's work \cite{Niu:SingLink} and \cite{Niu:MJLinkage}, it has been shown that singularities such as log-canonical and MJ-singularities are preserved under generic linkage. However, it is not clear if similar results can be established for the more general situation of residual intersections. 

In this paper, we focus ourselves on studying singularities and their invariants under a general residual intersection.  To put the problem in perspective, we first give a construction of a general residual intersection, where our residual intersection is slightly different from the one defined by Huneke-Ulrich. Then based on techniques from resolutions of singularities, we discuss a couple of problems related to Grauert-Riemenschneider (GR) canonical sheaves, log canonical thresholds and minimal log discrepancies under a general residual intersection. Recall that $X\subset A$ is a closed subvariety. Let $Y$ be a general $t$-residual intersection (see Definition \ref{def:01}) with $t\geq c$. We prove in Theorem \ref{p:31} that the GR canonical sheaf of $Y$ is
$$\omega^{GR}_Y=\sI(A,I^t_X)\cdot \sO_Y\otimes \omega_A,$$
where $\sI(A,I^t_X)$ is the multiplier ideal associated to the pair $(A,I^t_X)$, and the log canonical threshold of $Y$ increases, i.e.,
$$\lct(A,Y)\geq \lct (A,X).$$ If we impose the extra condition that $X$ is locally a complete intersection with rational singularities, the result of Chardin-Ulrich \cite[Theorem 3.13]{CU:Reg} implies that $Y$ has rational singularities. In this case, the GR canonical sheaf $\omega^{RG}_Y$ is the same as the canonical sheaf $\omega_Y$ and therefore we obtain a formula for the canonical sheaf of a general residual intersection (Corollary \ref{p:33}). 

Turning to the local case, let $x\in X$ be a closed point. In Theorem \ref{local}, we show that general equations can be chosen to compute the minimal log discrepancies at $x$. In particular, in Corollary \ref{p:01}, we prove that if a complete intersection $M$ is defined by $c$ general equations $a_1,\cdots, a_c$ of $I_X$, then  
$$\mldmj(x;X, \mf{a}^{\bf{m}}) = \mldmj(x;M, \mf{a}^{\bf{m}}) = \mldmj(x;H, \mf{a}^{\bf{m}}),$$
where the hypersurface $H$ is defined by the product $a_1\cdots a_c$ and $\mf{a}^{\bf{m}}$ is a formal product of ideals. Hence the computation of minimal log discrepancies can be reduced to the complete intersection case or, even better, the hypersurface case. 

At the end of the paper, we provide evidence (Proposition \ref{p:34}) suggesting that MJ-singularities are preserved by general residual intersection (Conjecture \ref{conj:01}). The notion of MJ-singularities was introduced by Ishii, De Fernex-Docampo, and Ein-Ishii-Musta\cedilla{t}\v{a} (\cite{Ishii:MatherDis}, \cite{Roi:JDiscrepancy}, \cite{Ein:MultIdeaMatherDis}, and \cite{Ein:SingMJDiscrapency}). The advantage of this notion is that it can be established for arbitrary varieties without requiring normality and $\nQ$-Gorensteiness. We hope that the method in this paper can be useful for this conjecture.

\bigskip

\noindent {\em Acknowledgement:} We are grateful to Bernd Ulrich for valuable discussions. The second author would like to thank the University of Tokyo for the hospitality of visiting.

\section{General residual intersections}

\noindent Throughout the paper, we work over an algebraically closed field $k$ of characteristic zero. By a variety we mean an equidimensional reduced scheme of finite type over $k$.

\begin{definition}\label{def:01} Let $X$ be a closed subscheme of an nonsingular affine variety $A=\Spec R$ with ideal sheaf $I_X$. Let $I_M=(a_1,\cdots, a_t)\subseteq I_X$ such that it defines a closed subscheme $M$ of $A$. Let $Y$ be the closure of $M\setminus X$ (denoted by $Y=\overline{M\setminus X}$), where the closure of $M\setminus X$ is the closed subscheme defined by the coherent ideal sheaf maximal among ideal sheaves whose restriction on $A\setminus X$ coincide with the ideal sheaf of  $M\setminus X$. If $t=\codim Y\geq \codim X$, then $Y$ is called a {\em $t$-residual intersection}.
\end{definition} 

\begin{remark}
	Huneke and Ulrich defined a $t$-residual intersection in a different way (\cite{HU88}):
	the closed subscheme defined by $J=( I_M: I_X )$ is called a $t$-residual intersection if 
	$\htt(J)\geq t \geq \htt(I_X)$.
	Let $t$ be an integer such that $t\geq \codim(X, A)$, if $X$ satisfies $G_t$ and strongly Cohen-Macaulay,
	then our residual intersection is Cohen-Macaulay and coincides with the one defined by Huneke-Ulrich.
	This is proved by applying \cite[Theorem 3.1]{Huneke:StrngCMredsiualInter}.
\end{remark}

\begin{remark} Our definition of a residual intersection is in some sense ``geometric", so it would be nice to call it 
	a  geometric $t$-residual intersection.
	But the terminology ``geometric residual intersection" is already used for a different meaning in  \cite{HU88}.
	A suggestive alternative is to call it a ``strongly geometric $t$-residual 
	intersection" as is written in the title of this paper.
	But it is too long to be used frequently, so in this paper we call it just ``residual 
	intersection".

\end{remark}

We are interested in understanding how properties of a variety can be ``transfered" to  its $t$-residual intersection. Certainly, in Definition \ref{def:01}, a different choice of the generators of $I_M$ will result in a different $t$-residual intersection. It would be wonderful if the desired properties can be established for all $t$-residual intersections under every possible choice of $I_M$. But more realistically, we would like to investigate so called ``general" $t$-residual intersection with respect to a generating set of $I_X$. We make this point clear in the following construction. 

\begin{construction}\label{def:02}  Let $X$ be a closed subscheme of a nonsingular affine variety $A=\Spec R$ defined by an ideal $I_X$. Fix a generating set $\underline{f}=(f_1,\cdots, f_r)$ of $I_X$. Let $\sP$ be a property that we are interested in. For $t\geq \codim_AX$, by saying that a general $t$-residual intersection $Y$ with respect to $\underline{f}$ has property $\sP$ we mean the following: given the ideal
	$$I_M:=(a_1,\cdots,a_t)=(c_{i,j})\cdot(f_1,\cdots,f_r)^T,$$
	in which
	$$a_i:=c_{i,1}f_1+c_{i,2}f_2+\cdots+c_{i,r}f_r,\quad\quad\quad\mbox{for } 1\leq i\leq t,$$
	and $(c_{i,j})$ is a $t\times r$ matrix with elements in $k$, then there is an open set $U$ in $\nA^{t\times r}$ such that for any $(c_{i,j})\in U$, the corresponding $t$-residual intersection $Y=\overline{M\setminus X}$ has property $\sP$.  By saying that a general $t$-residual intersection $Y$ has property $\sP$ without mentioning $\underline{f}$, we mean that it is true for any choice of a generating set $\underline{f}$.
\end{construction}

The main framework in this paper is to use resolution of singularities to turn a variety into a divisor and then choose general equations by using Bertini's theorem to produce a $t$-residual intersection. The crucial point of this approach is that it automatically yields a resolution of singularities of the residual intersection. We show this in the following construction.  
\begin{construction}\label{def:03} Let $X$ be a closed subscheme of a nonsingular affine variety $A=\Spec R$ defined by an ideal $I_X$. Fix a generating set $\underline{f}=(f_1,\cdots, f_r)$ of $I_X$.  So we have a surjective morphism 
	\begin{equation*} 
		v:\bigoplus^r \sO_A\stackrel{f_1,\cdots, f_r}{\verylongrightarrow} I_X\longrightarrow 0.
	\end{equation*}
	Take a log resolution $\varphi: A_1\rightarrow A$ of $I_X$ such that $I_X\cdot\sO_{A_1}=\sO_{A_1}(-E)$ where $E$ is an effective divisor such that $\Exc(\varphi)\cup E$ is a divisor with simple normal crossing (snc) supports. Pulling back by $\varphi$, we obtain a surjective morphism
	$$\overline{v}:\bigoplus^r \sO_{A_1}\stackrel{\varphi^*f_1,\cdots, \varphi^*f_r}{\superlongrightarrow} \sO_{A_1}(-E)\longrightarrow 0.$$
	So the linear system $\varphi^*|f_1,\cdots, f_r|$ has $E$ as its base locus and therefore it can be decomposed as
	$$\varphi^*|f_1,\cdots, f_r|=W+E,$$
	in which $W$ is a base point free linear system on $A_1$. Take $t$ general elements $\alpha_1,\cdots, \alpha_t$ from $W$ and each $\alpha_i$ defines an nonsingular effective divisor $F_i$ on $A_1$ by Bertini's theorem. So we have
	\begin{itemize}
		\item [(1)] the sections $\alpha_1,\cdots, \alpha_t$ cut out a nonsingular closed subscheme $Y_1$ (could be reducible or empty) of $A_1$ of codimension $t$;
		\item [(2)] the closed subscheme $Y_1$ meets $E$ and $\exc(\varphi)$ transversally;
		\item [(3)] the sections $\alpha_1,\cdots, \alpha_t$ correspond to elements $a_1,\cdots, a_t$ in the vector space $\langle f_1,\cdots, f_r\rangle$.
	\end{itemize}
	Set $I_M=(a_1,\cdots, a_t)$ and call the subscheme $Y=\overline{M\setminus X}$ a {\em general $t$-residual intersection} (with respect to $\underline{f}$). Note also that 
	\begin{itemize}
		\item [(4)] the restriction $\varphi|_{Y_1}:Y_1\rightarrow Y$ is a resolution of singualrities of $Y$;
		\item [(5)]$I_M\cdot \sO_{A_1}=I_{Y_1}\cdot \sO_{A_1}(-E).$
	\end{itemize}
\end{construction}

\begin{remark} (1) In Construction \ref{def:03}, by Bertini's theorem, we can obtain an open set $U$ such that for any choice of the $t\times r$ matrix $(c_{i,j})$ the corresponding general $t$-residual intersection $Y$ satisfies the listed properties (1)-(5). Once a property $\sP$ is imposed, we need to further shrink this open set $U$ to get the desired one in Construction \ref{def:02}. This kind of argument should be clear from the context, so we do not always mention it explicitly. 
	
	(2)  Roughly speaking, our Construction \ref{def:02} can be viewed as a specialization of the generic residual intersection constructed in \cite{HU88}.
\end{remark}
\begin{proposition} Let $X$ be a closed subscheme of an affine nonsingular variety $A$. Let $Y$ be a general $t$-residual intersection of $X$. Then $Y$ is empty if and only if $\overline{I_M}=\overline{I_X}$, where $\overline{\mf{a}}$ denotes the integral closure of an ideal $\mf{a}$. 
\end{proposition}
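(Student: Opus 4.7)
The plan is to analyze the problem on the log resolution set up in Construction~\ref{def:03}. Let $\varphi\colon A_1 \to A$ be that log resolution, with $I_X \cdot \sO_{A_1} = \sO_{A_1}(-E)$, and let $Y_1 \subset A_1$ be the smooth (possibly empty) codimension-$t$ subscheme cut out by the general sections $\alpha_1,\ldots,\alpha_t$. Properties (2), (4), (5) of Construction~\ref{def:03} say that $Y_1$ meets $E$ transversally, that $\varphi|_{Y_1}\colon Y_1 \to Y$ is a resolution of singularities (in particular surjective), and that $I_M \cdot \sO_{A_1} = I_{Y_1} \cdot \sO_{A_1}(-E)$. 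In particular, $Y = \emptyset$ if and only if $Y_1 = \emptyset$, and it will suffice to prove that $Y_1 = \emptyset$ if and only if $\overline{I_M} = \overline{I_X}$.

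For the ``if'' direction, I would observe that when $Y_1 = \emptyset$ the pullback $I_M \cdot \sO_{A_1} = \sO_{A_1}(-E) = I_X \cdot \sO_{A_1}$ is an invertible ideal sheaf. Since $A$ is smooth and $\varphi$ is proper birational, the standard formula computing the integral closure of an ideal from a birational model on which it becomes invertible then yields
$$\overline{I_M} \;=\; \varphi_*\bigl(I_M \cdot \sO_{A_1}\bigr) \;=\; \varphi_* \sO_{A_1}(-E) \;=\; \overline{I_X}.$$

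Conversely, if $Y_1 \neq \emptyset$, the plan is to produce a single divisorial valuation $v$ on the function field of $A$ with $v(I_M) > v(I_X)$, which by the valuative criterion for integral closure forces $\overline{I_M} \subsetneq \overline{I_X}$. I would pick an irreducible component $W$ of $Y_1$, let $\pi\colon A_2 \to A_1$ be the blow-up of $A_1$ along $W$ with exceptional divisor $D$, set $\psi = \varphi \circ \pi$, and take $v = \ord_D$. The transversality of $Y_1$ with the snc divisor $E$ forces $W \not\subset E_i$ for every component $E_i$ of $E$, so $\pi^* E_i$ equals the strict transform $\widetilde{E_i}$ and hence $\ord_D(\pi^* E) = 0$. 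Combined with $\pi^* I_{Y_1} = \sO_{A_2}(-D)$ near $D$, this yields $v(I_X) = 0$ and $v(I_M) = 1$; picking any $f \in I_X$ with $v(f) = 0$ (which exists because $v(I_X) = \min_{g \in I_X} v(g) = 0$) then produces an element of $\overline{I_X} \setminus \overline{I_M}$. The only step requiring real care is the calculation $\ord_D(\pi^* E) = 0$, where the transversality of $Y_1$ and $E$ guaranteed by the Bertini-type argument in Construction~\ref{def:03} is essential; the other ingredients (the valuative criterion and the formula $\overline{\mf{a}} = \varphi_*(\mf{a}\cdot\sO_{A_1})$ when $\mf{a}\cdot\sO_{A_1}$ is invertible) are standard.
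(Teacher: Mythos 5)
Your proof is correct, and while the forward implication ($Y_1 = \emptyset \Rightarrow \overline{I_M} = \overline{I_X}$) follows the paper's argument exactly, your treatment of the converse is genuinely different. The paper proves $\overline{I_M} = \overline{I_X} \Rightarrow Y = \emptyset$ directly, by passing to the normalization $A^+$ of the blowup of $A$ along $X$ (the model on which $I_X$ becomes invertible), observing that $I_M \cdot \sO_{A^+} = I_X \cdot \sO_{A^+}$, and deducing that $I_M$ restricted to $A\setminus X$ is already the unit ideal. You instead prove the contrapositive: assuming $Y_1 \neq \emptyset$, you blow up $A_1$ along an irreducible component $W$ of $Y_1$ and use the resulting divisorial valuation $v = \ord_D$ to exhibit a valuation with $v(I_X) = 0 < 1 = v(I_M)$, violating the valuative criterion. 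The computation $v(I_X) = \ord_D(\pi^*E) = 0$ is the crux, and your justification via transversality is right: if $W \subset E_i$, then $W \subset F_1 \cap \cdots \cap F_t \cap E_i$, which the snc condition forces to have codimension $t+1$ while $W$ has codimension $t$. What your route buys is that it stays entirely within the tower $A_2 \to A_1 \to A$ built out of Construction 2.4 and makes the obstruction to $\overline{I_M} = \overline{I_X}$ concrete as a single valuation; the paper's route is a bit shorter but requires bringing in the auxiliary model $A^+$ and the fact that equality of integral closures persists after pulling back to the normalized blowup.
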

\begin{proof} We use the notation in Construction \ref{def:03}. If $Y$ is empty, then $Y_1$ must be empty. This in turn implies that $I_M\cdot \sO_{A_1}=I_X\cdot \sO_{A_1}$. Therefore $\overline{I_M}=\overline{I_X}$ since $\overline{I_M}=\varphi_*(I_M\cdot \sO_{A_1})$ and $\overline{I_X}=\varphi_*(I_X\cdot \sO_{A_1})$.
	On the other hand, assume $\overline{I_M}=\overline{I_X}$. Denote by $A^+$ the normalization of the blowup of $A$ along $X$. We have $I_M\cdot\sO_{A^+}=I_X\cdot \sO_{A^+}$. This implies that on $A\setminus X$, $I_M=\sO_A$ so $Y$ is empty.
\end{proof}

\begin{proposition}
	\label{image}
	Let $X$ be a closed subscheme of an affine nonsingular variety $A$ defined by an ideal $I_X$ and let $\underline{f}=(f_1,\cdots, f_r)$ be a generating set of $I_X$. Let $Y$ be a general $t$-residual intersection with respect to $\underline{f}$ as in Construction \ref{def:03}. Consider the regular map 
	$$\psi:A\setminus X\longrightarrow \nA^r$$ 
	induced by the regular functions $f_1,\cdots, f_r$. Let $V$ be the closure of the image of $\psi$.
	\begin{itemize}
		\item [(1)] If $V$ is not a cone in $\nA^r$, then $Y$ is nonempty if and only if $$\dim k[f_1,\cdots, f_r] \geq t.$$
		\item [(2)] If $V$ is a cone in $\nA^r$,  then 		
		$Y$ is nonempty if and only if $$\dim k[f_1,\cdots, f_r] > t.$$
	\end{itemize}
\end{proposition}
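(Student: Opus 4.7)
The plan is to transport the question from $A$ to the image variety $V\subset \nA^r$ and reduce it to a linear incidence problem in projective space. Since $f_1,\dots,f_r$ generate $I_X$, the map $\psi$ extends to a morphism $A\to\nA^r$ whose set-theoretic fiber over the origin is $X$; hence $\psi(A\setminus X)=\psi(A)\setminus\{0\}$ and its closure agrees with $V=\overline{\psi(A)}=\Spec k[f_1,\dots,f_r]$, giving $\dim V=\dim k[f_1,\dots,f_r]$. A general residual intersection as in Construction~\ref{def:03} is cut out by $a_i=\sum_j c_{ij}f_j$ for a general matrix $(c_{ij})$, which corresponds under $\psi$ to $t$ general linear forms $\ell_1,\dots,\ell_t$ on $\nA^r$; letting $L=V(\ell_1,\dots,\ell_t)$, one has $M=\psi^{-1}(L)$. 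Therefore $Y$ is nonempty if and only if $L\cap V$ contains a point other than the origin.

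Next I would projectivize via $\pi\colon\nA^r\setminus\{0\}\to\nP^{r-1}$, reducing to the question: for a general linear subspace $\nP(L)\subset\nP^{r-1}$ of codimension $t$, when does $\nP(L)\cap W\neq\emptyset$, where $W:=\pi(V\setminus\{0\})$? (If $t\geq r$ then $L=\{0\}$ forces $Y=\emptyset$; in case~(1) this is consistent because $V\neq\nA^r$ gives $\dim V<r\leq t$, and in case~(2) because $\dim V\leq r\leq t$.) The crucial computation is $\dim W$. If $V$ is a cone, then $W=\nP(V)$ has dimension $\dim V-1$. If $V$ is not a cone, I claim $\dim W=\dim V$: the fiber of $\pi|_{V\setminus\{0\}}$ over $[\ell]$ is $(V\cap\ell)\setminus\{0\}$, and if the generic such fiber had dimension $\geq 1$ then $V\cap\ell=\ell$ for $[\ell]$ in a dense open subset of $W$; the closure in $\nA^r$ of the union of these lines would then be $V$ itself, forcing $V$ to be a cone, contrary to hypothesis.

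Finally, I would invoke the classical incidence fact that a general linear subspace of codimension $t$ in $\nP^{r-1}$ meets a closed subvariety of dimension $d$ precisely when $d\geq t$, together with the routine genericity check that the intersection point can be chosen in the open dense part $W\subset\overline{W}$ rather than in the boundary. Applied to $\overline{W}$: in case~(1), $Y$ is nonempty iff $\dim V\geq t$; in case~(2), iff $\dim V-1\geq t$, equivalently $\dim V>t$. Combined with $\dim V=\dim k[f_1,\dots,f_r]$, this yields both assertions of the proposition. The main technical step is the dimension computation $\dim\pi(V\setminus\{0\})=\dim V$ in the non-conical case, where non-conicity must be used to rule out positive-dimensional fibers; the remaining arguments are elementary dimension counts in $\nP^{r-1}$.
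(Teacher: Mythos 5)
Your proof is correct and follows essentially the same strategy as the paper's: projectivize the map to $\nA^r$ via $\pi:\nA^r\setminus\{0\}\to\nP^{r-1}$, identify the general residual intersection with $t$ general hyperplane sections of $W=\pi(V\setminus\{0\})$, and distinguish the cone/non-cone cases by whether $\dim W$ equals $\dim V-1$ or $\dim V$. You supply more detail than the paper does (the argument that non-conicity forces $\pi|_{V\setminus\{0\}}$ to be generically finite, the $t\geq r$ edge case, and the passage from $\overline{W}$ to $W$), and you also correctly state the cone-case dimension drop as $\dim V-1$ where the paper's printed "$\dim W=\dim V+1$" is a sign typo.
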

\begin{proof} Let $\pi: \nA^r\setminus \{0\} \to \nP^{r-1}$ be the canonical projection and let 
	$W\subset \nP^{r-1}$ be the image of $V$ by $\pi$. 
	Then $\dim W=\dim V +1 $ if $V$ is a cone in $ \nA^r$, while  $\dim W=\dim V$ otherwise.
	A pull-back of $t$ general hyperplane cuts by $\psi^{-1}\circ\pi^{-1}$ gives 
	$(A\setminus X)\cap M$ for general $M$.
	Therefore, $Y$ is non-empty if and only if $\dim W\geq t$.
	This is equivalent to that $\dim V\geq t+1$ if $V$ is a cone in $\nA^{r}$ and
	$\dim V \geq t$ if $V$ is not a cone in $\nA^r$. 
	The proposition follows from $V=\spec k[f_1,\cdots, f_r]$.
\end{proof}

\begin{example} For every closed subscheme $X\subset \nA^N$, there is a generator system 
	$\underline{g}$ 
	of $I_X$ such that, for any $t$ with $\codim (X, \nA^N)\leq t \leq N-1$, a general $t$-residual intersection of $X$ is nonempty.
	Indeed, if a generator system $\underline{f}=\{f_1,\ldots,f_r\}$ is given, then we can add more generators of the form
	$x_1f_1, x_2f_1,\ldots, x_Nf_1$ to obtain a bigger generator system $\underline{g}$.
	Then,  $$\dim k[f_1,\ldots,f_r, x_1f_1, x_2f_1,\ldots, x_Nf_1]=N,$$
	where $x_1,x_2,\ldots, x_N$ are the coordinate functions of $\nA^N$.
	By the proposition above, for any $t$ with $\codim (X, \nA^N)\leq t \leq N-1$, a general $t$-residual intersection is not empty.

\end{example}

\begin{example} Let $X\subset \nA^4$ be the cone over  two skew lines in $\nP^3$.
	Note that it is  a two dimensional variety defined by the ideal $I_X=(xz, xw, yz, yw)$.
	First take the generator system $\underline{f}=\{xz, xw, yz, yw\}$.
	Then, we observe that the closure $V$ of the image of 
	$$\psi:\nA^4 \setminus X\longrightarrow \nA^4$$ 
	is a cone and 
	has dimension $3 $. Then, a general 3-residual intersection of $X$ with respect to 
	$\underline{f}$ is empty 
	by the Proposition \ref{image}.
	We can also calculate  this directly by solving the equations.
	
	Next, we take another generator system $$\underline{g}=\{xz, xw, yz, yw, x(xz), y(xz), z(xz),
	w(xz) \}$$ as in the previous example. 
	Then, $\dim k[\ \underline{g}\ ]=4$ and a general $3$-residual intersection of $X$ with respect to $\underline {g}$ is a curve.
	
\end{example}

\section{Invariants of singularities  under a general residual intersection}

\noindent In this section, we prove our main results about general residual intersections. We start with recalling some basic definitions. Let $X$ and $Z$ be subvarieties of a nonsingular variety $A$. Take a log resolution $f:A'\rightarrow A$ of $I_X\cdot I_Z$ such that $f^{-1}(X)=\sum^s_{i=1} a_iE_i$, $f^{-1}(Z)=\sum^s_{i=1}z_iE_i$ and the relative canonical divisor $K_{A'/A}=\sum^s_{i=1} k_iE_i$.  The {\em log canonical threshold} of $(A,X)$ is defined to be
$$\lct(A,X)=\min_i\left\{\frac{k_i+1}{a_i}\right\}.$$ 
Having fixed $\lambda\in \nR_{+}$, we also define the {\em generalized log canonical threshold} of $(A,X;\lambda Z)$ by
$$\glct(A,X;\lambda Z)=\min\left\{\frac{k_i+1+\lambda z_i}{a_i}\right\}.$$
For $c\in \nR_{+}$, the {\em multiplier ideal sheaf} $\sI(A,cX)$ associated to the pair $(A,cX)$ is defined by
$$\sI(A,cX)=f_*\sO_{A'}(K_{A'/A}-\lfloor c\sum a_iE_i\rfloor),$$
where $\lfloor c\sum a_iE_i\rfloor$ is the round down of the $\nR$-divisor $c\sum a_iE_i$.

Let $W$ be a proper closed subset of $A$ and let ${\mf{a}}^{\bf{m}}={\mf{a}}_1^{m_1}{\mf{a}}_2^{m_2}\cdots {\mf{a}}_n^{m_n}$ be a formal product of ideals ${\mf{a}}_i\subseteq \sO_A$ with ${\bf{m}}=(m_1,\cdots, m_n)\in \nR^n_{\geq 0}$. We define the {\em minimal log discrepancy} of $(A,{\mf{a}}^{\bf{m}})$ along $W$ as
$$\mld(W;A,{\mf{a}}^{\bf{m}})=\inf_{C_A(E)\subseteq W}\{{a}(E;A,{\mf{a}}^{\bf{m}})+1\ |\ E \mbox{ a prime divisor over $A$} \}$$
where ${a}(E;A,{\mf{a}}^{\bf{m}})$ is the discrepancy of $E$ and $C_A(E)$ is the center of $E$ in $A$. We use the convention that if $\dim A=1$ and $\mld(W;A,{\mf{a}}^{\bf{m}})$ is negative, then we set it as $-\infty$. For more details on the invariants we defined above, we refer to, for instance, the work \cite{Ein:JetSch}.

We also use the notation of Mather-Jacobian singularities (MJ-singularities for short) which was introduced and studied in \cite{Ishii:MatherDis}, \cite{Roi:JDiscrepancy}, \cite{Ein:MultIdeaMatherDis} and \cite{Ein:SingMJDiscrapency}. Recall that $X$ is a variety of dimension $n$ and let $f:X'\longrightarrow X$ be a log resolution of the Jacobian ideal $\Jac_X$ of $X$. Then the image of the canonical homomorphism
$$f^*(\wedge^n\Omega^1_X)\longrightarrow \wedge^n\Omega^1_{X'}$$
is an invertible sheaf of the form $\Jac_f\cdot\wedge^n\Omega^1_X$, where $\Jac_f$ is the relative Jacobian ideal of $f$. The ideal $\Jac_f$ is invertible and defines an effective divisor $\widehat{K}_{X'/X}$ which is called the {\em Mather discrepancy divisor} (see also \cite[Remark 2.3]{Ein:MultIdeaMatherDis}).  For an ideal $\mf{a}\subseteq \sO_X$ and $t\in \nQ_{\geq 0}$ and for a prime divisor $E$ over $X$, consider a log resolution $\varphi:X'\longrightarrow X$ of $\Jac_X\cdot \mf{a}$ such that $E$ appears in $X'$ and $\mf{a}\cdot\sO_{X'}=\sO_{X'}(-Z)$ and $\Jac_X\cdot\sO_{X'}=\sO_{X'}(-J_{X'/X})$ where $Z$ and $J_{X'/X}$ are effective divisors on $X'$. We define the {\em Mather-Jacobian-discrepancy} ({\em MJ-discrepancy} for short) of $E$ to be
$$a_{\text{MJ}}(E;X,\mf{a}^t)=\ord_E(\widehat{K}_{X'/X}-J_{X'/X}-tZ).$$
The number $a_{\text{MJ}}(E;X,\mf{a}^t)+1$ is called the {\em Mather-Jacobian-log discrepancy} ({\em MJ-log discrepancy} for short). It is independent on the choice of the log resolution $\varphi$. Let $W$ be a proper closed subset of $X$ and let $\eta$ be a point of $X$ such that its closure $\overline{\{\eta\}}$ is a proper closed subset of $X$. We define the {\em minimal MJ-log discrepancy} of $(X,\mf{a}^t)$ along $W$ as
$$\mldmj(W;X,\mf{a}^t)=\inf_{C_X(E)\subseteq W}\{\ a_{\text{MJ}}(E;X,\mf{a}^t)+1\ |\ E \mbox{ a prime divisor over $X$} \}$$
and the {\em minimal MJ-log discrepancy} of $(X,\mf{a}^t)$ at $\eta$ as
$$\mldmj(\eta;X,\mf{a}^t)=\inf_{C_X(E)=\overline{\{\eta\}}}\{\ a_{\text{MJ}}(E;X,\mf{a}^t)+1\ |\ E \mbox{ a prime divisor over $X$} \}.$$
We say that $X$ is {\em MJ-canonical} (resp. {\em MJ-log canonical}) if for every exceptional prime divisor $E$ over $X$, the MJ-discrepancy $a_{\text{MJ}}(E;X,\sO_X)\geq 0$ (resp. $\geq -1$) holds. When $X$ is nonsingular, the notion of MJ-singularities is the same as the usual (log) canonical singularities. We shall use the following version of the Inversion of Adjunction. It plays a critical role in transferring singularity information from a variety to its ambient space.  
\begin{theorem}[Inversion of Adjunction, {\cite{Ishii:MatherDis}\cite{Roi:JDiscrepancy}}]Let $X$ be a codimension $c$ subvariety of a nonsingular variety $A$ defined by the ideal $I_X$.
	\begin{itemize}
		\item [(1)] Let $W\subset X$ be a proper closed subset of $X$. Then
		$$\mldmj(W;X,\sO_X)=\mld(W;A,I^c_X).$$
		\item [(2)] Let $\eta\in X$ be a point such that its closure $\overline{\{\eta\}}$ is a proper closed subset of $X$. Then
		$$\mldmj(\eta;X,\sO_X)=\mld(\eta ;A,I^c_X).$$
	\end{itemize}
\end{theorem}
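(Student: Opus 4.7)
The plan is to compare divisorial valuations on $X$ with divisorial valuations on $A$ that are centered in $X$, by means of a single carefully chosen log resolution. Using Hironaka's theorem one fixes a log resolution $f:A'\longrightarrow A$ of $I_X$ such that the strict transform $X':=f^{-1}_*(X)$ is smooth, the induced morphism $g:=f|_{X'}:X'\longrightarrow X$ is a log resolution of $\Jac_X$, and all relevant divisors (exceptional components of $f$, the effective divisor $D$ on $A'$ with $I_X\cdot\sO_{A'}=\sO_{A'}(-D)$, and $X'$ itself) lie in simple normal crossing position.

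The core of the argument would be to establish a numerical identity on $X'$ relating the Mather-Jacobian data for $g$ (the divisor $\widehat{K}_{X'/X}-J_{X'/X}$) to the ordinary discrepancy data for $f$ twisted by $c$ copies of $I_X$ (the divisor $K_{A'/A}-c\cdot D$), restricted from $A'$ to $X'$. This identity would be extracted from three ingredients: the Fitting ideal description of $\Jac_X$ (as a suitable Fitting ideal of $\Omega^1_X$), which relates $J_{X'/X}$ to the conormal sheaf of $X'\subset A'$ and to $D|_{X'}$; ordinary adjunction for smooth subvarieties of smooth varieties, applied iteratively (when $c\geq 2$ one reduces to the divisor case by blowing up $X'$ inside $A'$); and the Nash-blowup description of $\widehat{K}_{X'/X}$. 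The power $c$ of $I_X$ would appear naturally as the codimension of $X$ in $A$. Once the identity is in place, it would yield a correspondence between divisorial valuations $\ord_E$ over $X$ and divisorial valuations $\ord_F$ over $A$ with center in $X$, under which the MJ-log discrepancy of $E$ on $(X,\sO_X)$ equals the ordinary log discrepancy of $F$ on $(A,I^c_X)$; taking infima over divisors with center in $W$ for (1) and with center equal to $\overline{\{\eta\}}$ for (2) would yield both equalities.

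The hardest part will be the identity itself, especially in the higher-codimension case $c\geq 2$ where $X'$ is not a divisor in $A'$ and ordinary adjunction does not apply directly. One must carefully track how $\Jac_X$ restricts to $X'$ via its Fitting description, dissect the interaction between components of $D$ meeting $X'$ and those disjoint from $X'$ (controlled but not trivialized by the SNC assumption), and verify that all conormal corrections and pullback contributions combine to produce exactly the claimed $c\cdot D|_{X'}$ term. Once this identity is proved, matching valuations and passing to infima on each side is routine.
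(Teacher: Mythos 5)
The paper does not prove this theorem: it is quoted from Ishii \cite{Ishii:MatherDis} and de Fernex--Docampo \cite{Roi:JDiscrepancy}, and both of those proofs run through the arc spaces of $X$ and of $A$ (codimension formulas for contact loci and cylinders), not through a resolution-by-resolution comparison of discrepancies. So there is no in-paper argument to compare against; what follows is an assessment of your proposed route on its own terms.

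Your outline locates the difficulty in the ``identity'' on $X'$ and then declares ``matching valuations and passing to infima on each side is routine.'' That last step is in fact where the real gap is. Even granting an exact identity on a fixed $X'\subset A'$, there is no clean correspondence between divisorial valuations over $X$ with center in $W$ and divisorial valuations over $A$ with center in $W$. In one direction, a prime divisor $F$ exceptional for $f:A'\to A$ with $f(F)\subseteq W$ may be disjoint from the strict transform $X'$ (blow up a point of $\Exc(f)$ lying off $X'$ and take the new exceptional divisor), so $F$ produces no divisor over $X$ to compare with, yet it still competes in $\mld(W;A,I_X^c)$. In the other direction, a prime divisor $E\subset X'$ with $g(E)\subseteq W$ is a component of $F\cap X'$ for some $F\subset A'$, but the center $f(F)$ need not be contained in $W$ (only $f(E)\subseteq f(F)$ is automatic), so $F$ need not contribute to the infimum defining $\mld(W;A,I_X^c)$. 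Consequently, matching divisors on a common resolution and comparing discrepancies yields at most one of the two inequalities; the equality --- the ``precise'' inversion of adjunction --- is exactly what the arc-space machinery supplies: one compares codimensions of cylinders in the two arc spaces, with the exponent $c$ and the Jacobian ideal entering through the contact-locus and Fitting-ideal bookkeeping, and this bypasses any valuation-by-valuation bijection. Unless you import that machinery (or an equivalent substitute), the resolution-only plan will not close.
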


Our first main result is to show that minimal log discrepancies can be computed by using a complete intersection or a hypersurface. This result sheds light on how to apply linkage theory to the study of singularities. 
\begin{theorem} \label{local}
	Let $x$ be a closed point of a nonsingular affine variety $A$. Let $I\subseteq \sO_A$ be an ideal generated by $\{f_1,\ldots,f_r\}$ and let $\tilde{\mf{a}}^{\bf{m}}=\tilde{\mf{a}}_1^{m_1}\tilde{\mf{a}}_2^{m_2}\cdots \tilde{\mf{a}}_n^{m_n}$ be a formal product of ideals $\tilde{\mf{a}}_i\subseteq \sO_A$ with ${\bf{m}}=(m_1,\cdots, m_n)\in \nR^n_{\geq 0}$. Then for $t\geq 1$ general elements $a_1,\cdots, a_t$ in the vector space $\langle f_1,\cdots, f_r \rangle$, the ideals $I_M=(a_1,\cdots,a_t)$ and $I_H=(a_1a_2\cdots a_t)$ satisfy the following properties.
	
	There exist a log resolution $\varphi:A_1\rightarrow A$ of $\tilde{\mf{a}}^{\bf{m}}\cdot I\cdot I_H\cdot \mf{m}_x$ and a log resolution $\psi:A_2\rightarrow A$ of $\tilde{\mf{a}}^{\bf{m}}\cdot I\cdot I_M\cdot I_H\cdot \mf{m}_x$ such that $\psi$ factors through $\varphi$, i.e., $\psi=\varphi\circ\mu$, and $\mu_*$ induces a one-to-one correspondence
	$$\{\text{prime divisors in $A_2$ with center $x$}\}\stackrel{\mu_*}{\longrightarrow} \{ \text{prime divisors in $A_1$ with center $x$}\}.$$
	Furthermore, for a prime divisor $F\subset A_1$ with center $x$ and a real number $0\leq \lambda\leq t$, 
	$$a(F;A,I^{\lambda}\cdot \tilde{\mf{a}}^{\bf{m}})=a(F;A, I^{\lambda}_M\cdot\tilde{\mf{a}}^{\bf{m}})=a(F; A,I_H^{\lambda/t}\cdot \tilde{\mf{a}}^{\bf{m}}), \text{ and}$$
	$$\mld(x;A,I^{\lambda}\cdot \tilde{\mf{a}}^{\bf{m}})=\mld(x;A, I^{\lambda}_M\cdot\tilde{\mf{a}}^{\bf{m}})=\mld(x; A,I_H^{\lambda/t}\cdot \tilde{\mf{a}}^{\bf{m}}).$$
\end{theorem}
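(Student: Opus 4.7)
The plan is to build a single pair of log resolutions $\varphi\colon A_1\to A$ and $\psi = \varphi\circ\mu\colon A_2\to A$ in which $A_1$ already resolves $I$, $I_H$, and $\tilde{\mf{a}}^{\bf{m}}\cdot\mf{m}_x$ simultaneously, while $\mu\colon A_2\to A_1$ is a single blow-up whose new exceptional divisors all have centers in $A$ strictly larger than $\{x\}$. Once this is arranged, the discrepancies of all three ideals $I$, $I_M$, and $I_H$ over $x$ will be computed by the same family of valuations, and the two claimed chains of equalities will follow from a direct divisorial computation.

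To build $\varphi$, start with any log resolution of $\tilde{\mf{a}}^{\bf{m}}\cdot I\cdot \mf{m}_x$ and run Construction~\ref{def:03}: decompose $\varphi^*|f_1,\ldots,f_r| = W+E$ with $I\cdot\sO_{A_1} = \sO_{A_1}(-E)$ and $W$ base-point-free, and pick $t$ general sections $\alpha_1,\ldots,\alpha_t\in W$ corresponding to $a_i = \sum_j c_{ij}f_j$. By Bertini applied to $W$ and to its restrictions to the strata of $E+\Exc(\varphi)$, the $F_i := (\alpha_i)$ are smooth, mutually transverse, and transverse to $E$ and to the exceptional divisors of $\varphi$, so $Y_1 := F_1\cap\cdots\cap F_t$ is smooth of codimension $t$. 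Since $\dim \varphi^{-1}(x)\leq n-1$ with $n = \dim A$, the dimension estimate $\dim(Y_1\cap Z)\leq \dim Z - t$ applied to $Z = \varphi^{-1}(x)$ and to each exceptional divisor of $\varphi$ with center $\{x\}$ shows that no component of $Y_1$ is contained in $\varphi^{-1}(x)$ or in any such exceptional divisor. Since $I_H\cdot\sO_{A_1} = \sO_{A_1}(-F_1-\cdots-F_t-tE)$ has SNC support, $\varphi$ is a log resolution of $\tilde{\mf{a}}^{\bf{m}}\cdot I\cdot I_H\cdot \mf{m}_x$.

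Now let $\mu\colon A_2\to A_1$ be the blow-up of $Y_1$ with exceptional divisor $G$ (possibly reducible). The SNC transversality of $Y_1$ with the arrangement on $A_1$ makes $\psi$ a log resolution of $\tilde{\mf{a}}^{\bf{m}}\cdot I\cdot I_M\cdot I_H\cdot \mf{m}_x$, with $I_M\cdot\sO_{A_2} = \sO_{A_2}(-G-E')$, $E' = \mu^*E$. The center of each component of $G$ in $A$ is $\overline{\varphi(Y_1^{(j)})}\neq \{x\}$ by the non-containment, and each prime divisor $F\subset A_1$ with center $x$ has a well-defined strict transform $F'\subset A_2$ distinct from $G$, so $\mu_*$ yields the required bijection. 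For such $F,F'$, the valuations $\ord_F$ and $\ord_{F'}$ agree on $k(A)$, $\ord_{F'}(G) = 0$, and $K_{A_2/A_1} = (t-1)G$ give $\ord_{F'}(K_{A_2/A}) = \ord_F(K_{A_1/A})$; similarly $\ord_{F'}(E') = \ord_F(E)$ and $\ord_{F'}(\tilde{\mf{a}}_i\sO_{A_2}) = \ord_F(\tilde{\mf{a}}_i\sO_{A_1})$. Since each $F_i$ has center a divisor in $A$ (never $\{x\}$), $F\neq F_i$ and $\ord_F(F_i) = 0$, so $\ord_F(I_H\sO_{A_1}) = t\,\ord_F(E)$. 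Substituting into the standard discrepancy formula gives
$$a(F;A,I^\lambda\tilde{\mf{a}}^{\bf{m}}) \;=\; a(F';A,I_M^\lambda\tilde{\mf{a}}^{\bf{m}}) \;=\; a(F;A,I_H^{\lambda/t}\tilde{\mf{a}}^{\bf{m}}),$$
and taking infima over prime divisors with center $x$ (transported across $\mu_*$) yields the $\mld$ equalities.

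The main obstacle is to show that all these generality conditions on $(c_{ij})$ can be realized simultaneously on a nonempty open subset $U\subseteq \nA^{t\times r}$. Each is an open condition: Bertini-type smoothness and transversality are standard, and the non-containments reduce to finitely many dimension estimates against $\varphi^{-1}(x)$ and the exceptional divisors of $\varphi$ over $x$, each of dimension at most $n-1$. The intersection of these open sets is nonempty because $W$ is base-point-free on $A_1$, and in particular on each such closed subset.
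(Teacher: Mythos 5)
Your construction of $A_1$, the base-point-free decomposition $\varphi^*|f_1,\ldots,f_r|=W+E$, the Bertini argument, the dimension estimate ruling out $Y_1\subset\varphi^{-1}(x)$, and the single blow-up $\mu\colon A_2\to A_1$ with $K_{A_2/A_1}=(t-1)G$ are exactly the paper's Construction~\ref{def:03} and its proof of Theorem~\ref{local}. The divisorial computation establishing
$$a(F;A,I^\lambda\cdot\tilde{\mf{a}}^{\bf m})=a(F;A,I_M^\lambda\cdot\tilde{\mf{a}}^{\bf m})=a(F;A,I_H^{\lambda/t}\cdot\tilde{\mf{a}}^{\bf m})$$
for prime divisors $F\subset A_1$ with center $x$ is correct and matches the paper.

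However, the final step---``taking infima over prime divisors with center $x$ yields the $\mld$ equalities''---is where a genuine gap appears. The mld is an infimum over \emph{all} prime divisors over $A$ with center $\{x\}$, not just those visible on $A_1$ or $A_2$, so one must pass through \cite[Proposition 7.2]{Ein:JetSch} (or an equivalent statement) to compute it on the fixed log resolution $A_2$. That proposition says the mld equals the minimum of the log discrepancies of divisors on the resolution with center $=\{x\}$ \emph{provided} that the log discrepancies are nonnegative for every divisor on the resolution whose center contains $x$; otherwise the mld is $-\infty$. Your argument only handles divisors with center exactly $\{x\}$ and never verifies that the $-\infty$ dichotomy is triggered simultaneously for $I$, $I_M$, and $I_H$. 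This cannot be dismissed: since $I_M\subseteq I$, one direction of the $-\infty$ implication is automatic but the other is not. The paper closes this by a two-case analysis of prime divisors whose center properly contains $\{x\}$: if the divisor lies in $G$ then its log discrepancy is $t\geq 0$ for the $I$-data and $t-\lambda\geq 0$ for the $I_M$-data (so both are harmless), while if it lies outside $G$ the two log discrepancies coincide, possibly both negative. A similar check (using $\ord_F I_H=t\,\ord_F E$ away from the $F_i$, and log discrepancy $1-\lambda/t\geq 0$ on the $F_i$, whose centers avoid $x$) is needed for $I_H$. Without this step, the claimed $\mld$ equalities are not established.

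As a minor point, the phrase that the new exceptional divisors of $\mu$ have ``centers strictly larger than $\{x\}$'' should be ``centers not equal to $\{x\}$''; a component of $G$ may have center disjoint from $x$ altogether, which is fine for the bijection but worth stating precisely, especially since the distinction between $x\in C_A(F)$ and $x\notin C_A(F)$ is exactly what matters in the missing case analysis above.
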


\begin{proof}For simplicity, we prove the case $n=1$ and $\tilde{\mf{a}}^{\bf{m}}=\tilde{\mf{a}}^m$ .  Take a log resolution $\varphi: A_1\longrightarrow A$ of $\tilde{\mf{a}}\cdot I\cdot \mf{m}_x$ such that $I\cdot\sO_{A_1}=\sO_{A_1}(-E)$ and $\tilde{\mf{a}}\cdot \sO_{A_1}=\sO_{A_1}(-Z)$ where $E$ and $Z$ are effective divisors and $\Exc(\varphi)\cup E\cup Z$ has  a snc support. As in Construction \ref{def:03}, we have a decomposition 
	$$\varphi^*| f_1,\cdots, f_r |=W+E$$
	where $W$ is free linear system. Choose $t$ general elements $\alpha_1,\cdots, \alpha_t$ in $W$ and denote by $F_i$ the zero locus of $\alpha_i$. By Bertini's theorem, $F_i$ can be chosen such that $F_1\cup\cdots \cup F_t\cup \Exc(\varphi)\cup E\cup Z$ has a snc support and therefore the intersection $Y_1=F_1\cap\cdots \cap F_t$ is a nonsingular subscheme (not necessarily irreducible) in $A_1$ of codimension $t$. Furthermore, the sections $\alpha_1,\cdots, \alpha_t$ correspond to the general elements $a_1,\cdots, a_t$ as desired in the vector space $\langle f_1,\cdots, f_r \rangle$ such that 
	$$I_M\cdot \sO_{A_1}=I_{Y_1}\cdot \sO_{A_1}(-E).$$
	Now blowup $A_1$ along $Y_1$ to obtain $\mu: A_2=\bl_{Y_1}A_1\rightarrow A_1$ with an exceptional divisor $G$. Set $\psi=\varphi\circ\mu: A_2\rightarrow A$. Notice that
	$$K_{A_2/A_1}=(t-1)G$$
	Hence we obtain 
	$$K_{A_2/A}-\lambda I\cdot \sO_{A_2}-m\tilde{\mf{a}}\cdot \sO_{A_2}=\mu^*K_{A_1/A}+(t-1)G-\lambda\mu^*E-m\mu^*Z, \text{ and}$$
	$$K_{A_2/A}-\lambda I_M\cdot \sO_{A_2}-m\tilde{\mf{a}}\cdot \sO_{A_2}=\mu^*K_{A_1/A}-(t-1-\lambda)G-\lambda\mu^*E-m\mu^*Z.$$
	
	We shall use \cite[Proposition 7.2]{Ein:JetSch} to compute the $\mld$ at $x$. First notice that, if a prime divisor with center $x$ appears in $A_1$ then it must appear in $A_2$ and vice versa, which gives the one-to-one correspondence in the proposition. Let $F\subset A_1$ be such a prime divisor with center $x$. Then clearly $F$ is not contained in $Y_1$ and therefore  $F$ is not contained in $G$. Hence for any real number $0\leq \lambda\leq t$, we have $a(F;A,I^{\lambda}\cdot \tilde{\mf{a}}^m)=a(F;A, I^{\lambda}_M\cdot\tilde{\mf{a}}^m)$. 
	
	Next, we consider those prime divisors whose center contains $\{x\}$ as a proper subset. Let $F$ be such a prime divisor. We have two possibilities. First, $F$ is contained in $G$. In this case, $F$ cannot be in the support of $\mu^*K_{A_1/A}\cup\mu^*E\cup \mu^*Z$. So we have
	$$a(F;A,I^{\lambda}\cdot \tilde{\mf{a}}^m)=\ord_FK_{A_2/A}-\lambda\ord_FI-m\ord_F\tilde{\mf{a}}+1=(t-1)+1\geq 0, \text{ and}$$
	$$a(F;A, I^{\lambda}_M\cdot\tilde{\mf{a}}^m)=\ord_FK_{A_2/A}-\lambda\ord_FI_M-m\ord_F\tilde{\mf{a}}+1=(t-1-\lambda)+1\geq 0.$$
	The second possibility is that $F$ appears in $A_2$ but is not contained in $G$. In this case, $F$ must also appear in $A_1$. This implies $\ord_FI=\ord_FI_M$ and therefore  $a(F;A,I^{\lambda}\cdot \tilde{\mf{a}}^m)=a(F;A, I^{\lambda}_M\cdot\tilde{\mf{a}}^m)$ (could be negative). 
	
	All of the above implies that $$\mld(x;A,I^{\lambda}\cdot \tilde{\mf{a}}^m)=\mld(x;A, I^{\lambda}_M\cdot\tilde{\mf{a}}^m),$$
	as desired.
	
	For the case of $I_H=(a_1\cdots a_t)$, we notice that $\varphi$ is a log resolution of $I\cdot I_H\cdot \mf{m}_x$ such that 
	$$I_H\cdot \sO_{A_1}=F_1+\cdots +F_t+tE.$$
	Hence 
	$$K_{A_1/A}-\frac{\lambda}{t}\cdot I_H\cdot \sO_{A_1}-m\tilde{\mf{a}}\cdot\sO_{A_1}=K_{A_1/A}-\frac{\lambda}{t}F_1-\cdots-\frac{\lambda}{t}F_t-\lambda E-mZ.$$
	Now for a prime divisor $F$ contained in some $F_i$, the center $C_A(F)$ cannot be $x$ and hence 
	$$\ord_FK_{A_1/A}-\frac{\lambda}{t}\ord_F\cdot I_H-m\ord_F\tilde{\mf{a}}+1=-\frac{\lambda}{t}+1\geq  0.$$
	This proves the last equality. 
\end{proof}

As a quick corollary, we see that at a fixed point, minimal log discrepancies are preserved under a general link. For a related result without fixing a point, we refer to \cite{Niu:MJLinkage}.
\begin{corollary}\label{p:01}If $I=I_X$ is the ideal of a closed subvariety   $X\subseteq A$ of codimension $c$ and $t=\lambda=c$, then the closed subscheme $M$ defined by $I_M$ is a complete intersection containing $X$ and 
	$$\mld(x;A,I_X^c\cdot \tilde{\mf{a}}^{\bf{m}})=\mld(x;A, I^c_M\cdot\tilde{\mf{a}}^{\bf{m}})=\mld(x; A,I_H\cdot \tilde{\mf{a}}^{\bf{m}}),$$	
	which yields
	$$\mldmj(x;X, \mf{a}^{\bf{m}}) = \mldmj(x;M, \mf{a}^{\bf{m}}) = \mldmj(x;H, \mf{a}^{\bf{m}}),$$
	where $\mf{a}$ denotes the restrictions of $\tilde{\mf{a}}$ on $X$, $M$ and $H$. Also the prime divisors which compute the minimal log discrepancies are common. Moreover, let $Y=\overline{M\setminus X}$, then 
	$$\mldmj(x;Y,{\mf{a}}^{\bf{m}})= \mld(x;A,I_Y^c\cdot \tilde{\mf{a}}^{\bf{m}})\geq \mld(x;A,I_X^c\cdot \tilde{\mf{a}}^{\bf{m}})=\mldmj(x;X, {\mf{a}}^{\bf{m}}).$$	
\end{corollary}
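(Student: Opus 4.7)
The plan is a direct application of Theorem~\ref{local}, combined with Inversion of Adjunction to move between ambient $\mld$ and intrinsic $\mldmj$ on the subvarieties. With $t=c=\codim X$ and the $c$ general sections $a_1,\ldots,a_c$ chosen from $I_X$ as in Construction~\ref{def:03}, the Bertini step forces the subscheme $Y_1\subset A_1$ to have codimension exactly $c$; hence $M=V(I_M)$ has codimension $c$ in $A$ and contains $X$, and being cut out by $c$ equations on a nonsingular variety it is a complete intersection. Applying Theorem~\ref{local} with $I=I_X$ and $\lambda=t=c$ (so that $\lambda/t=1$ and $I_H^{\lambda/t}=I_H$) gives at once
$$\mld(x;A,I_X^c\cdot\tilde{\mf{a}}^{\bf m})=\mld(x;A,I_M^c\cdot\tilde{\mf{a}}^{\bf m})=\mld(x;A,I_H\cdot\tilde{\mf{a}}^{\bf m}),$$
together with the one-to-one correspondence $\mu_*$ between prime divisors with center $x$ on $A_2$ and on $A_1$. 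Because the three discrepancies agree on any such divisor, the prime divisors achieving the minimum are common to all three pairs.

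Next, I translate these ambient identities into $\mldmj$ identities by Inversion of Adjunction. The version stated above handles $\sO_X$-coefficients, but its extension to a formal product $\mf{a}^{\bf m}$, where each $\mf{a}_i$ is the restriction of $\tilde{\mf{a}}_i$, is standard and appears in the references \cite{Ishii:MatherDis} and \cite{Roi:JDiscrepancy} (see also \cite{Ein:MultIdeaMatherDis}). Applied to the codimension-$c$ subvarieties $X$ and $M$ and to the hypersurface $H$ it yields
$$\mldmj(x;Z,\mf{a}^{\bf m})=\mld(x;A,I_Z^c\cdot\tilde{\mf{a}}^{\bf m})\ \text{for}\ Z\in\{X,M\},\qquad \mldmj(x;H,\mf{a}^{\bf m})=\mld(x;A,I_H\cdot\tilde{\mf{a}}^{\bf m}),$$
and substituting into the chain above produces the second displayed chain of the corollary.

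For the final inequality, the key observation is that $Y=\overline{M\setminus X}\subseteq M$ scheme-theoretically by Definition~\ref{def:01}, so $I_M\subseteq I_Y$. For any divisorial valuation $\ord_E$ this forces $\ord_E(I_Y)\leq\ord_E(I_M)$, whence $a(E;A,I_Y^c\cdot\tilde{\mf{a}}^{\bf m})\geq a(E;A,I_M^c\cdot\tilde{\mf{a}}^{\bf m})$; taking the infimum over prime divisors with center $x$ yields
$$\mld(x;A,I_Y^c\cdot\tilde{\mf{a}}^{\bf m})\geq\mld(x;A,I_M^c\cdot\tilde{\mf{a}}^{\bf m})=\mld(x;A,I_X^c\cdot\tilde{\mf{a}}^{\bf m}).$$
Since $Y$ is codimension $c$ in $A$, a further application of Inversion of Adjunction identifies $\mldmj(x;Y,\mf{a}^{\bf m})$ with $\mld(x;A,I_Y^c\cdot\tilde{\mf{a}}^{\bf m})$ and $\mldmj(x;X,\mf{a}^{\bf m})$ with $\mld(x;A,I_X^c\cdot\tilde{\mf{a}}^{\bf m})$, completing the chain. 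The only nonroutine ingredient beyond Theorem~\ref{local} is the extension of Inversion of Adjunction to pair coefficients $\tilde{\mf{a}}^{\bf m}$; all the remaining steps are bookkeeping driven by the one-to-one correspondence $\mu_*$ and the inclusion $I_M\subseteq I_Y$.
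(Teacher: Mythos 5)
Your proof is correct and follows exactly the route the paper intends: apply Theorem~\ref{local} with $t=\lambda=c$ (so $\lambda/t=1$) to get the first chain of ambient $\mld$ equalities, translate via Inversion of Adjunction (in its standard extension to formal products of ideals) to get the $\mldmj$ chain, and for the last chain use the scheme-theoretic containment $I_M\subseteq I_Y$ (which indeed follows directly from the maximality in Definition~\ref{def:01}) to push the middle inequality through the valuation-theoretic comparison, then apply Inversion of Adjunction once more on each side.

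One minor point worth tightening: the sentence ``the Bertini step forces $Y_1\subset A_1$ to have codimension exactly $c$; hence $M=V(I_M)$ has codimension $c$'' elides a step, since the pullback of $I_M$ to $A_1$ cuts out $Y_1$ together with the divisor $E$, and components of $M$ lying inside $X$ are not seen by $Y_1$. The gap closes in one line: every component of $M$ has codimension $\leq c$ by Krull's theorem, and every component either lies inside $X$ (codimension $\geq c$, since $\codim X=c$) or is a component of $Y=\overline{M\setminus X}$ (codimension $c$, by the birationality of $\varphi|_{Y_1}$ onto $Y$), so all components have codimension exactly $c$ and $M$ is a complete intersection. With that addition the argument is complete.
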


\begin{remark} In \cite[Theorem 5.6, (ii)]{Ein:SingMJDiscrapency}, it is proved that a two dimensional non-complete
	intersection MJ-log canonical variety $X$ has a complete intersection model $M$, i.e., $M$ is a
	complete intersection containing $X$ and
	$$\mldmj(x;X) = \mldmj(x;M) = 0.$$
	Therefore, Corollary \ref{p:01} is a generalization of \cite[Theorem 5.6, (ii)]{Ein:SingMJDiscrapency}.
\end{remark}

In the second main result, we establish a formula for the  Grauert-Riemenschneider canonical sheaf of a general residual intersection. One of the central topics in the study of residual intersection is to obtain a formula for  canonical sheaves, which is usually difficult. However, the Grauert-Riemenschneider canonical sheaf is always contained in the canonical sheaf and they are the same under certain conditions. We also give a description of log canonical thresholds as well as its generalized form under general residual intersection, which says that after doing residual intersections, singularities improve.

\begin{theorem}\label{p:31} Let $X$ be a closed subvariety   of an affine nonsingular variety $A$ and let $Y$ be a general $t$-residual intersection of $X$. If $Y$ is nonempty, then one has
	\begin{itemize}		
		\item [(1)] $\omega^{GR}_Y=\sI(A,I_X^t)\cdot \sO_Y\otimes \omega_A$.
		\item [(2)] $\lct(A,Y)\geq \lct(A,M)=\lct(A,X)$.
		\item [(3)] $\glct(A,Y;(t-c)X)\geq \glct(A,M;(t-c)X)=\lct(A,X)+(t-c)$.
	\end{itemize}
\end{theorem}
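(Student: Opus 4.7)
The plan is to dispose of (2) and (3) by a direct divisorial computation on a common log resolution, and to attack (1) via adjunction plus pushforward. For (2) and (3), compose the log resolution $\varphi\colon A_1\to A$ of $I_X$ from Construction~\ref{def:03} with the blowup $\mu\colon A_2=\Bl_{Y_1}A_1\to A_1$ of the transversal complete intersection $Y_1$ (exceptional divisor $G$); then $\psi=\varphi\circ\mu$ is a log resolution of $I_X\cdot I_M$. Writing $E=\sum a_iE_i$ and $K_{A_1/A}=\sum k_iE_i$, transversality of $Y_1$ with $E$ forces $\mu^*E_i=E_i'$ (strict transform), so
$$K_{A_2/A}=\sum k_iE_i'+(t-1)G,\quad I_X\sO_{A_2}=\sO_{A_2}\bigl(-\textstyle\sum a_iE_i'\bigr),\quad I_M\sO_{A_2}=\sO_{A_2}\bigl(-\textstyle\sum a_iE_i'-G\bigr).$$
Reading off candidate ratios on these divisors yields $\lct(A,M)=\min(\lct(A,X),t)$ and $\glct(A,M;(t-c)X)=\min(\lct(A,X)+(t-c),t)$, and both minima are attained by the first term because $\lct(A,X)\leq c\leq t$. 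Finally $I_M\subseteq I_Y$ forces $\ord_F(I_Y)\leq\ord_F(I_M)$ for every prime divisor $F$ over $A$, so direct comparison of the defining minima yields $\lct(A,Y)\geq\lct(A,M)$ and $\glct(A,Y;(t-c)X)\geq\glct(A,M;(t-c)X)$.

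For (1), adjunction on $A_1$ computes $\omega_{Y_1}$. Because $\udiv(\varphi^*a_i)=F_i+E$ is principal, $\sO_{A_1}(F_i)\cong\sO_{A_1}(-E)$, so for the transversal complete intersection $Y_1=\bigcap_iF_i$ one has $\omega_{Y_1}=\omega_{A_1}(\sum F_i)|_{Y_1}\cong\omega_{A_1}(-tE)|_{Y_1}=\varphi^*\omega_A|_{Y_1}\otimes L|_{Y_1}$, where $L=\sO_{A_1}(K_{A_1/A}-tE)$. Pushing forward by $\varphi|_{Y_1}\colon Y_1\to Y$ and applying the projection formula yields $\omega^{GR}_Y=\omega_A|_Y\otimes(\varphi|_{Y_1})_*L|_{Y_1}$. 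Since $\varphi_*L=\sI(A,I_X^t)$, the theorem reduces to the identification $(\varphi|_{Y_1})_*L|_{Y_1}=\sI(A,I_X^t)\cdot\sO_Y$, which I analyze through the natural restriction map $r\colon\sI(A,I_X^t)=\varphi_*L\to(\varphi|_{Y_1})_*L|_{Y_1}$ induced by $L\to L|_{Y_1}$.

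Computing the kernel of $r$ is immediate: a local section $h\in\sI(A,I_X^t)$ lies in $\ker r$ if and only if $\varphi^*h$ vanishes on $Y_1$, and since $\varphi|_{Y_1}$ is a proper birational surjection onto $Y$ this is equivalent to $h\in I_Y$. Hence $\ker r=\sI(A,I_X^t)\cap I_Y$, and the induced map realizes $\sI(A,I_X^t)/(\sI(A,I_X^t)\cap I_Y)=\sI(A,I_X^t)\cdot\sO_Y$. The surjectivity of $r$ is the core of the argument and the main anticipated obstacle. Via the short exact sequence $0\to I_{Y_1}\otimes L\to L\to L|_{Y_1}\to 0$ it reduces to the vanishing $R^1\varphi_*(I_{Y_1}\otimes L)=0$, which I plan to establish by tensoring the Koszul resolution of $I_{Y_1}$ by $L$; after the isomorphism $\sO_{A_1}(-F_i)\cong\sO_{A_1}(E)$ each term becomes $\sO_{A_1}(K_{A_1/A}-jE)$ with $0\leq j\leq t$, and local vanishing of relative Kawamata--Viehweg type annihilates higher $\varphi_*$-cohomology on each piece. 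A standard spectral-sequence argument propagates this vanishing to $I_{Y_1}\otimes L$, yielding the required surjectivity and completing the proof.
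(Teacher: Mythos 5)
Your proposal is correct, and for parts (2) and (3) it is essentially the paper's argument verbatim: pass to the second blowup, compute the candidate ratios on $G=T$ versus the other divisors, and use $\lct(A,X)\le c\le t$ together with $I_M\subseteq I_Y$.

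For part (1) you arrive at the same short exact sequence
$0\to I_{Y_1}\otimes L\to L\to L|_{Y_1}\to 0$
with $L=\sO_{A_1}(K_{A_1/A}-tE)$, and the same reduction to the vanishing $R^1\varphi_*(I_{Y_1}\otimes L)=0$; the difference is only in how that vanishing is proved. You tensor the Koszul resolution of $I_{Y_1}$ by $L$, observe each term is (a sum of copies of) $\sO_{A_1}(K_{A_1/A}-(t-j)E)$ with $0\le t-j\le t-1$, kill higher direct images term by term by Local Vanishing on $A_1$, and propagate. The paper instead blows up $A_1$ along $Y_1$, writes $\sG=\sO_{\tilde A}(K_{\tilde A/A}-t(T+\mu^*E))$, notes $\psi=\varphi\circ\mu$ is a log resolution of $I_M$ so Local Vanishing gives $R^i\psi_*\sG=0$, and descends by Leray. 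Both are correct; your Koszul route is more self-contained for (1) and avoids re-using the second blowup, while the paper's route reuses $\tilde A$ (already needed for (2)/(3)) and links the vanishing to $\sI(A,I_M^t)$. You also make explicit the identification of $\ker r$ with $\sI(A,I_X^t)\cap I_Y$, which the paper leaves implicit. One point worth spelling out in your write-up: the equivalence ``$\varphi^*h$ restricts to $0$ as a section of $L|_{Y_1}$ iff $h\in I_Y$'' uses that $Y_1$ meets $E\cup\Exc(\varphi)$ transversally (so a local equation of $K_{A_1/A}-tE$ is a unit at every generic point of $Y_1$) and that $Y$ is reduced; these facts come from Construction \ref{def:03} and should be cited.
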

\begin{proof} We use the notation in Construction \ref{def:03}.	Now the sections $\alpha_i$ give a surjective morphism
	$$\bigoplus^t \sO_{A_1}(E)\stackrel{\alpha_1,\cdots,\alpha_t}{\verylongrightarrow} I_{Y_1}\longrightarrow 0.$$
	Since $Y_1$ is a complete intersection in $A_1$, restricting the above map onto $Y_1$ we obtain the conormal bundle of $Y_1$ inside $A_1$ as
	$$N^*_{Y_1/A_1}=\bigoplus^t \sO_{Y_1}(E).$$
	Taking the determinant of $N^*_{Y_1/A_1}$ and tensoring with $\omega_{A_1}$ yields  the dualizing sheaf of $Y_1$ as
	$$\omega_{Y_1}=\omega_{A_1}\otimes \sO_{Y_1}(-tE).$$
	We also have a short exact sequence 
	$$0\longrightarrow I_{Y_1}\longrightarrow \sO_{A_1}\longrightarrow \sO_{Y_1}\longrightarrow 0$$
	which induces a short exact sequence
	\begin{equation}\label{eq:01}
		0\longrightarrow I_{Y_1}\otimes \sO_{A_1}(K_{A_1/A}-tE)\longrightarrow \sO_{A_1}(K_{A_1/A}-tE)\longrightarrow \omega_{Y_1}\otimes \varphi^*\omega^{-1}_A\longrightarrow 0.
	\end{equation}

	We further blowup $A_1$ along $Y_1$ as $\mu: \tilde{A}=\Bl_{Y_1}A_1\rightarrow A$ with an exceptional divisor $T$ on $\tilde{A}$ such that $I_{Y_1}\cdot \sO_{\tilde{A}}=\sO_{\tilde{A}}(-T)$. Write $\psi=\varphi\circ\mu: \tilde{A}\rightarrow A$ and set $$\sG=\sO_{\tilde{A}}(-T)\otimes \mu^*( \sO_{A_1}(K_{A_1/A}-tE)).$$
	
	\begin{claim} One has $R^1\varphi_*\sG=0$.
	\end{claim}
	{\em Proof of Claim}: Note that $\mu_*\sG=I_{Y_1}\otimes  \sO_{A_1}(K_{A_1/A}-tE)$ and for $i>0$, $R^i\mu_*\sG=R^i\mu_*\sO_{\tilde{A}}(-T)\otimes \sO_{A_1}(K_{A_1/A}-tE)=0$. On the other hand, notice that $K_{\tilde{A}/A_1}=(t-1)T$ and $K_{\tilde{A}/A}=K_{\tilde{A}/A_1}+\mu^*K_{A_1/A}$ so we have
	$$\sG=\sO_{\tilde{A}}(-T)\otimes \mu^*( \sO_{A_1}(K_{A_1/A}-tE))=\sO_{\tilde{A}}(K_{\tilde{A}/A}-t(T+\mu^*E)).$$
	But by construction, $I_M\cdot\sO_{A_1}=I_{Y_1}\cdot\sO_{A_1}(-E)$ and therefore $I_{M}\cdot \sO_{\tilde{A}}=\sO_{\tilde{A}}(-T-\mu^*E)$. Hence we obtain that $\sG=\sO_{\tilde{A}}(K_{\tilde{A}/A}-t(I_{M}\cdot \sO_{\tilde{A}}))$. By Local Vanishing Theorem \cite[9.4.1]{Lazarsfeld:PosAG2}, we then obtain that $R^i\psi_*(\sG)=0$, for $i>0$. Now we use the Leray spectral sequence
	$$E^{p,q}_2=R^p\varphi_*R^q\mu_*\sG\Rightarrow R^{p+q}(\varphi\circ\mu)_*(\sG)$$
	to get that  $R^1\varphi_*\sG=0$, completing the proof of claim.
	
	\bigskip

	Now we prove (1). For this, we push down the short exact sequence (\ref{eq:01}) and use the claim above. So we obtain an short exact sequence  
	$$0\longrightarrow \varphi_*(I_{Y_1}\otimes \sO_{A_1}(K_{A_1/A}-tE))\longrightarrow \sI(A,I^t_X)\longrightarrow \omega^{GR}_Y\otimes \omega^{-1}_A\longrightarrow 0.$$
	This implies that $\sI(A,I^t_X)\cdot \sO_Y=\omega^{GR}_Y\otimes \omega^{-1}_A$, which gives rise to the desired formula by tensoring with $\omega_A$. 
	
	For (2), recall that $K_{\tilde{A}/A}=(t-1)T+\mu^*K_{A_1/A}$, $I_M\cdot\sO_{\tilde{A}}=\sO_{\tilde{A}}(-T-\mu^*E), \text{ and }I_X\cdot\sO_{\tilde{A}}=\sO_{\tilde{A}}(-\mu^*E).$
	In order to compute $\lct(A,M)$, we consider prime divisors on $\tilde{A}$. For the divisor $T$, we have
	$$\frac{k_T+1}{\ord_TI_M}=\frac{t-1+1}{1}=t.$$
	For any prime divisor $F\neq T$ on $\tilde{A}$, we have $\ord_FI_M=\ord_FI_X$ and hence
	$$\frac{k_F+1}{\ord_FI_M}=\frac{k_F+1}{\ord_FI_X}.$$
	Thus
	$$\lct(A,M)=\min_{F\neq T}\{t, \frac{k_F+1}{\ord_FI_M}\}=\min_{F\neq T}\{t, \frac{k_F+1}{\ord_FI_X}\}=\min\{ t, \lct(A,X)\}.$$
	But since $X$ is generically smooth, we have $\lct(A,X)\leq c\leq t$. So we obtain that $\lct(A,M)= \lct(A,X)$. Finally since $I_M\subseteq I_Y$, we then have $\lct(A,Y)\geq \lct(A,M)=\lct(A,X)$.
	
	For (3), we continue to work on $\tilde{A}$. For the divisor $T$, we have
	$$ \frac{k_T+\ord_TI_X^{t-c}+1}{\ord_TI_M}=\frac{t-1+1}{1}=t.$$
	For any prime divisor $F\neq T$, we have $\ord_FI_M=\ord_FI_X$ and therefore
	$$ \frac{k_F+\ord_FI_X^{t-c}+1}{\ord_FI_M}=\frac{k_F+(t-c)\ord_FI_X+1}{\ord_FI_X}=\frac{k_F+1}{\ord_FI_X}+(t-c).$$
	Hence we see
	$$\glct(A,M;(t-c)I_X)=\min_{F\neq T}\{t,\frac{k_F+1}{\ord_FI_X}+(t-c) \}=\min\{t, \lct(A,X)+(t-c)\}.$$
	Since $X$ is a variety of codimension $c$, we have $\lct(A,X)\leq c$ and therefore $\lct(A,X)+(t-c)\leq t$. Hence $\glct(A,M;(t-c)I_X)=\lct(A,X)+(t-c)$, from which the result follows.
\end{proof}

\begin{corollary}\label{p:33} Let $X$ be a closed subvariety   in an affine nonsingular variety $A$. Assume that $X$ is locally a complete intersection with rational singularities. Then a general $t$-residual intersection $Y$ of $X$ with $t\geq \codim_AX$ has rational singularities and 
	$$\omega_Y=\sI(A,I^{t})\cdot \sO_Y\otimes \omega_A.$$
\end{corollary}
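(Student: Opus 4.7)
The strategy is to combine Theorem \ref{p:31} with the Chardin--Ulrich theorem on rational singularities under residual intersection, plus the standard identification of the Grauert--Riemenschneider canonical sheaf with the canonical sheaf for varieties with rational singularities. The plan splits into three steps: transfer of rational singularities from $X$ to $Y$, the equality $\omega_Y = \omega_Y^{GR}$ that rationality provides, and direct substitution into the formula already obtained in Theorem \ref{p:31}(1).

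First, I would verify that our notion of general $t$-residual intersection is covered by the hypotheses of \cite[Theorem 3.13]{CU:Reg}. Since $X$ is locally a complete intersection, it is strongly Cohen--Macaulay (being locally defined by a regular sequence), and moreover $\mu(I_{X,P}) = \htt(I_{X,P}) = c$ for every prime $P \supseteq I_X$, so $X$ satisfies $G_s$ for every $s$. By the remark following Definition \ref{def:01}, the subscheme $Y$ constructed via Construction \ref{def:03} therefore agrees with the Huneke--Ulrich residual intersection and is Cohen--Macaulay. Invoking \cite[Theorem 3.13]{CU:Reg} then yields that $Y$ has rational singularities.

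Second, I would use the fact that when $Y$ has rational singularities, the Grauert--Riemenschneider canonical sheaf coincides with the canonical sheaf: indeed, for any resolution $f \colon Y' \to Y$, one has $\omega_Y^{GR} = f_* \omega_{Y'}$, and rationality (together with Cohen--Macaulayness of $Y$ just established) gives $f_* \omega_{Y'} = \omega_Y$. Substituting into the formula of Theorem \ref{p:31}(1) gives
\[
\omega_Y = \omega_Y^{GR} = \sI(A, I_X^{\,t}) \cdot \sO_Y \otimes \omega_A,
\]
which is the statement of the corollary.

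The main obstacle is really the bookkeeping in the first step: confirming that the ``general'' residual intersections of Construction \ref{def:03} fall within the setup where Chardin--Ulrich applies. Once the coincidence with the Huneke--Ulrich residual intersection is checked via the Cohen--Macaulay and $G_s$ conditions provided by the locally complete intersection hypothesis, the rest of the argument is a direct assembly of Theorem \ref{p:31}(1), \cite[Theorem 3.13]{CU:Reg}, and the standard equivalence between rational singularities and $\omega_Y^{GR} = \omega_Y$.
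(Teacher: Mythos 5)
Your proof is correct and follows essentially the same route as the paper: cite \cite[Theorem 3.13]{CU:Reg} for rational singularities, use that rationality gives $\omega_Y^{GR}=\omega_Y$, and substitute into Theorem \ref{p:31}(1). The only difference is that you spell out the verification (lci implies strongly CM and $G_s$ for all $s$, hence the construction matches the Huneke--Ulrich setup) that the paper handles implicitly via the remark following Definition \ref{def:01}; this is a reasonable addition but not a different argument.
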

\begin{proof} Using \cite[Theorem 3.13]{CU:Reg}, $Y$ has rational singularities and therefore $\omega_Y^{GR}=\omega_Y$. Now the result follows from Theorem \ref{p:31}.
\end{proof}

The following proposition describes the singularities of a general $t$-residual intersection of a nonsingular variety. 
The statement (1) is proved in  \cite[Theorem 3.13]{CU:Reg} and (2) is proved 
in \cite[Corollary 8.1.4.]{FOV99}.
Here we prove them in different ways; along the line of the construction of general
residual intersections.

\begin{proposition} Let $X$ be a nonsingular closed subvariety   of a nonsingular affine  variety $A$ and let $Y$ be a general $t$-residual intersection of $X$. If $Y$ is nonempty, then 
	\begin{itemize}
		\item [(1)] $Y$ has rational singularities.
		\item [(2)] $\codim_Y\Sing(Y)\geq t-c+4$.
		\item [(3)] $\omega_Y=I_X^{t-c+1}\cdot\sO_Y\otimes\omega_A$.
	\end{itemize}
\end{proposition}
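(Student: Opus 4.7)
The plan is to exploit the fact that, since $X$ is nonsingular, the blowup $\varphi : A_1 := \Bl_X A \to A$ is itself a log resolution of $I_X$ and can be used in Construction \ref{def:03}. Write $E$ for the exceptional divisor; then $E = \nP(N^*_{X/A}) \to X$ is a $\nP^{c-1}$-bundle with $\sO_{A_1}(-E)|_E \cong \sO_E(1)$ and $K_{A_1/A} = (c-1)E$, and the sections $\alpha_1, \dots, \alpha_t$ cutting out $Y_1$ are general sections of $\sO_{A_1}(-E)$, whose restrictions to $E$ correspond to $t$ general sections $\bar\alpha_1, \dots, \bar\alpha_t$ of the normal bundle $N_{X/A}$.

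Statement (3) will follow from (1) combined with Theorem \ref{p:31}(1). Rational singularities on $Y$ force $\omega_Y = \omega^{GR}_Y$, and a one-line computation on $A_1$ gives
$$\sI(A, I_X^t) \;=\; \varphi_* \sO_{A_1}\bigl(K_{A_1/A} - tE\bigr) \;=\; \varphi_* \sO_{A_1}\bigl(-(t-c+1)E\bigr) \;=\; I_X^{t-c+1},$$
the last equality because powers of the ideal of a smooth subvariety are integrally closed. For (1), I would show that $\pi := \varphi|_{Y_1} : Y_1 \to Y$ is a rational resolution, i.e.\ $R^q \pi_* \sO_{Y_1} = 0$ for $q > 0$. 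This can be attempted via the Koszul resolution of $\sO_{Y_1}$ on $A_1$ attached to the regular sequence $\alpha_1, \dots, \alpha_t \in H^0(A_1, \sO_{A_1}(-E))$, combined with the standard iterative computation of $R^q \varphi_* \sO_{A_1}(kE)$ coming from the short exact sequences $0 \to \sO_{A_1}((k-1)E) \to \sO_{A_1}(kE) \to \sO_E(-k) \to 0$ and the cohomology of the $\nP^{c-1}$-bundle $E \to X$, assembled by a hypercohomology spectral sequence.

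For (2), since $\pi$ is an isomorphism away from $E$, one has $\Sing(Y) \subseteq \varphi(Y_1 \cap E)$. Fiberwise over $x \in X$, $Y_1 \cap E$ is the projectivization in $\nP(N^*_{X/A,x})$ of the annihilator of the span of $\bar\alpha_1|_x, \dots, \bar\alpha_t|_x$, so setting
$$X_k := \bigl\{ x \in X : \rank\langle \bar\alpha_1|_x, \dots, \bar\alpha_t|_x \rangle \leq c - k \bigr\},$$
the standard codimension estimate for general determinantal loci gives $\codim_X X_k \geq k(t-c+k)$. The content of (2) is then the inclusion $\Sing(Y) \subseteq \varphi(X_2)$: at any $y \in X_1 \setminus X_2$ the fiber $\pi^{-1}(y)$ is a single reduced point $z \in Y_1$, and one has to verify that $d\pi_z$ is an isomorphism, so that $Y$ is smooth at $y$. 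Granting this, $\dim \Sing(Y) \leq \dim X - 2(t-c+2) = \dim A - 2t + c - 4$, which subtracted from $\dim Y = \dim A - t$ yields $\codim_Y \Sing(Y) \geq t - c + 4$. The main obstacle I anticipate is precisely this smoothness check at points of $X_1 \setminus X_2$: a finite reduced fiber alone does not force $\pi$ to be a local isomorphism, so one has to leverage in addition that $Y_1$ is cut out from $A_1$ by a regular sequence of sections of $\sO_{A_1}(-E)$ and that $Y$ is Cohen-Macaulay (by the Huneke-Ulrich theory recalled after Definition \ref{def:01}) to promote unramifiedness at $z$ into smoothness of $Y$ at $y$.
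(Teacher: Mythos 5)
Your overall architecture matches the paper's: take $A_1 = \Bl_X A$, push down the Koszul complex of $\alpha_1,\dots,\alpha_t$ along $\varphi$ to establish $R^i\varphi_*\sO_{Y_1}=0$ and $\varphi_*\sO_{Y_1}=\sO_Y$ for part (1), use $\sI(A,I_X^t)=I_X^{t-c+1}$ together with Theorem \ref{p:31} for part (3), and bound $\Sing(Y)$ by a determinantal degeneracy locus for part (2). Your degeneracy locus $X_2$ is the paper's $\sigma_{c-2}$ (you phrase it in terms of the normal bundle, the paper in terms of $I_X/I_X^2 = \pi_*\sO_E(1)$, but the ranks and expected codimension $2(t-c+2)$ agree), and your arithmetic for $\codim_Y\Sing(Y)\geq t-c+4$ is correct.

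The one place you flag as a ``main obstacle'' — proving smoothness of $Y$ at a point $y\in X_1\setminus X_2$ where $\pi^{-1}(y)$ is a single reduced point — is exactly where the paper is terse, so you have correctly located the subtlety. However, the route you sketch (leveraging Cohen--Macaulayness of $Y$ from Huneke--Ulrich to upgrade unramifiedness to an isomorphism) is a detour. The clean way to close the gap is to use what part (1) already gave you: the proof of (1) establishes $\pi_*\sO_{Y_1}=\sO_Y$ (equivalently, $Y$ is normal). Once you know $\pi_*\sO_{Y_1}=\sO_Y$ and $\pi^{-1}(y)=\{z\}$, you get $\sO_{Y,y}\cong(\pi_*\sO_{Y_1})_y\cong\sO_{Y_1,z}$ (since $\pi$ is proper with a single point over $y$, the sets $\pi^{-1}(U)$ for $U\ni y$ are cofinal among neighborhoods of $z$), so $Y$ is nonsingular at $y$ because $Y_1$ is. Equivalently, invoke Zariski's Main Theorem for the proper birational map $\pi$ onto the normal $Y$: a finite fiber forces $\pi$ to be an isomorphism near $z$. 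In particular you do not need to verify surjectivity of $d\pi_z$ directly, nor do you need the Cohen--Macaulay input; proving (1) first and then feeding it into case (b) of part (2) is both necessary and sufficient.
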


\begin{proof} We use Construction \ref{def:03}. Since $X$ is nonsingular, we can assume $X$ is irreducible. We can take $A_1$ as the blowup of $A$ along $X$ and therefore the divisor $E$ is irreducible.  Since $Y_1$ is a complete intersection in $A_1$, we have a Koszul resolution of $\sO_{Y_1}$ as follows	$$0\longrightarrow \sO_{A_1}(tE)\longrightarrow\cdots\longrightarrow \bigoplus^t\sO_{A_1}(E)\longrightarrow \sO_{A_1}\longrightarrow \sO_{Y_1}\longrightarrow 0.$$
	Note that $R^i\varphi_*\sO_{A_1}(jE)=0$ for $i>0$, $0\leq j\leq c-1$ and  $R^i\varphi_*\sO_{A_1}(jE)=0$ for $i\geq c$ and $j\geq 0$. Hence pushing down the complex above, we deduce that $R^i\varphi_*\sO_{Y_1}=0$ for $i>0$ and the map $\sO_{A}\rightarrow \varphi_*\sO_{Y_1}$ is surjective as well. But we have a natural subjection $\sO_A\rightarrow \sO_Y$ through which the map $\sO_{A}\rightarrow \varphi_*\sO_{Y_1}$ factors, i.e., $\sO_A\rightarrow \sO_Y\rightarrow \varphi_*\sO_{Y_1}$. Hence we conclude that $\varphi_*\sO_{Y_1}=\sO_Y$ and therefore $Y$ has rational singularities.
	
	It is clear that $\Sing(Y)\subseteq X\cap Y$. For any $x\in X$, denote by $E_x$ the fiber of the map $\varphi|_E:E\rightarrow X$. We have the following three cases for the intersection $Y_1\cap E_x$.
	\begin{itemize}
		\item [(a)] $Y_1\cap E_x=\emptyset$, which is equivalent to  $x\notin X\cap Y$. 
		\item [(b)] $Y_1\cap E_x$ is a closed point, which implies that $x\in X\cap Y$ and $Y$ is nonsingular at $x$.
		\item [(c)]	$Y_1\cap E_x$ has dimension $\geq 1$.
	\end{itemize}
	Since $E=\nP(I_X/I^2_X)$, the $t$ general sections give $t$ sections in $H^0(\sO_E(1))$. Consider the map
	$$\psi:\bigoplus^t\sO_X\longrightarrow I_X/I^2_X$$
	induced by the $t$ sections. We see that the locus of points $x$ satisfying case (c) is inside 
	$$\sigma_{c-2}=\{x\in X|\rank \psi\otimes k(x)\leq c-2\}.$$
	Hence $\Sing(Y)\subseteq \sigma_{c-2}$. But the codimension of $\sigma_{c-2}$ in $X$ is $(t-(c-2))(c-(c-2))$. So we obtain $\codim_Y\Sing(Y)\geq t-c+4$.
	
	Now for (3), since $X$ is nonsingular, the multiplier ideal $\sI(A,I^t_X)=I_X^{t-c+1}$. It follows from Proposition \ref{p:31} and $Y$ having rational singularities that $\omega_Y=\omega^{GR}_Y=I_X^{t-c+1}\cdot\sO_Y\otimes\omega_A$.
	
\end{proof}

At the end of this section, we discuss which singularities might  be preserved under a general residual intersection. Chardin-Ulrich's result (\cite{CU:Reg}) says that a general residual intersection of a local complete intersection with rational singularities also has rational singularities. A complete answer on rational singularities under a general link has been given in \cite{Niu:SingLink}. However, the situation is not clear for a general residual intersection. We point out that a local complete intersection with rational singularities has MJ-canonical singularities. In the generic linkage case, it has been proved that MJ-singularities are preserved. So it is reasonable to expect that a similar result can be established for a general residual intersection. Along this line, we provide the following evidence. 

\begin{proposition}\label{p:34} Let $X$ be a closed subvariety   of dimension $\leq 3$  of a nonsingular affine  variety $A$. Assume that $X$ is locally a complete intersection with MJ-log canonical (resp, MJ-canonical) singularities. Then a general  $t$-residual intersection $Y$ of $X$ with $t\geq \codim_AX$ is also locally a complete intersection with MJ-log canonical (resp. MJ-canonical) singularities. 
\end{proposition}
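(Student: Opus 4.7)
The plan is to reduce the proposition to two statements about $Y$: (a) that $Y$ is locally a complete intersection of codimension $t$, and (b) an ordinary log canonical (resp.\ canonical) threshold bound for $Y$. Since a local complete intersection is Gorenstein, the Inversion of Adjunction stated just before the proof of the previous proposition identifies MJ-log canonical with ordinary log canonical (and MJ-canonical with canonical) for lci varieties, so (a) combined with (b) implies the proposition.

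For (a), I would work in the setup of Construction~\ref{def:03}: $\varphi\colon A_1\to A$ is a log resolution of $I_X$ with $I_X\sO_{A_1}=\sO_{A_1}(-E)$, and $Y_1\subset A_1$ is the smooth complete intersection of $t$ general sections $\alpha_1,\ldots,\alpha_t$ of the mobile part $W$. At any $y\in Y\setminus X$, the stalk of $I_Y$ coincides with the stalk of $I_M=(a_1,\ldots,a_t)$, so $Y$ is lci of codimension $t$ there. The content is at $y\in X\cap Y$. Here I would exploit the lci hypothesis on $X$: locally $E$ is a $\nP^{c-1}$-bundle over $X$, and the $\alpha_i$ restrict to generic linear forms on fibers by Bertini. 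Studying the fibers of $\varphi|_{Y_1}\colon Y_1\to Y$ over $y$, the restriction $\dim X\leq 3$ keeps this fiber geometry tractable and permits a direct local embedding-dimension calculation showing that $Y$ remains lci at $y$.

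For (b), I invoke Theorem~\ref{p:31}(3): since $X$ is MJ-lc and lci, Inversion of Adjunction gives $\lct(A,X)\geq c$, hence $\glct(A,Y;(t-c)X)\geq t$. For a prime divisor $F$ over $A$ with $C_A(F)\not\subseteq X$ we have $\ord_F I_X=0$, so the bound directly yields $\frac{k_F+1}{\ord_F I_Y}\geq t$. For $F$ with $C_A(F)\subseteq X\cap Y$, I would use the identity $I_M\sO_{\tilde A}=\sO_{\tilde A}(-T-\mu^\ast E)$ from the proof of Theorem~\ref{p:31}, together with the saturation description $Y=\overline{M\setminus X}$, to obtain $\ord_F I_Y\leq \ord_F I_M-\ord_F I_X$; substituting into the generalized threshold inequality once more gives $\frac{k_F+1}{\ord_F I_Y}\geq t$. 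Combining with (a) and Inversion of Adjunction for $Y$, one obtains $\mldmj(y;Y,\sO_Y)\geq 0$ for every closed $y\in Y$, i.e., $Y$ is MJ-lc. The MJ-canonical case runs identically with the corresponding strengthened bound from $X$ MJ-canonical lci.

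I expect the principal obstacle to be step (a). The residual intersection of an lci is not lci in general, so the dimension hypothesis $\dim X\leq 3$ does genuine work; it controls the fiber dimensions of $\varphi|_{Y_1}$ over points of $X\cap Y$ and lets one verify the local complete intersection property directly by an embedding-dimension count. Beyond dimension three this analysis breaks down, which is precisely why the general statement is left open as Conjecture~\ref{conj:01}.
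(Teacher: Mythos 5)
Your plan bifurcates into (a) establishing that $Y$ is lci and (b) deducing an ordinary log canonical bound for $Y$ from Theorem~\ref{p:31}(3). Both halves have problems, and the paper's actual proof takes a genuinely different route.

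The main gap is in (b). You want the key inequality $\ord_F I_Y\leq \ord_F I_M-\ord_F I_X$ for divisors $F$ over $A$, but the relation actually goes the \emph{other} way. From Construction~\ref{def:03} one has $I_M\cdot\sO_{A_1}=I_{Y_1}\cdot I_X\cdot\sO_{A_1}$, and since $\varphi|_{Y_1}:Y_1\to Y$ is a morphism, $I_Y\cdot\sO_{A_1}\subseteq I_{Y_1}$; hence for any $F$ (on a model dominating $A_1$) one gets $\ord_F I_Y\geq\ord_F I_{Y_1}=\ord_F I_M-\ord_F I_X$, with strict inequality possible. Concretely, take $A=\nA^3$, $X=V(x,y)$, $a_1=x(1+z)$, $a_2=y$, so $M=X\cup Y$ with $Y=V(1+z,y)$; for $F$ the exceptional divisor of blowing up $(0,0,-1)$ one computes $\ord_F I_X=\ord_F I_Y=\ord_F I_M=1$, so $\ord_F I_Y=1>0=\ord_F I_M-\ord_F I_X$. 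Your derivation of $\frac{k_F+1}{\ord_F I_Y}\geq t$ therefore breaks down precisely for divisors whose center meets $X\cap Y$ — which is the only nontrivial case. In effect, the bound from $\glct(A,M;(t-c)X)$ is not strong enough on its own to control $\lct(A,Y)$; the contribution $(t-c)\ord_F I_X$ goes in the wrong direction and is not absorbed. For (a), you do not actually give the embedding-dimension argument; you only assert that $\dim X\leq 3$ makes it feasible, which is not a proof.

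The paper's proof sidesteps both of these issues. It first takes a general $c$-residual intersection $Y_c$ (a general link), where $\dim X\leq 3$ is used to invoke Huneke–Ulrich's Proposition 2.9 to conclude $Y_c$ is lci and \emph{normal}; the MJ-singularity statement for $Y_c$ is then imported wholesale from the generic-linkage result of \cite{Niu:MJLinkage} (rather than derived from Theorem~\ref{p:31}). Normality plus lci identifies MJ-(log)canonical with ordinary (log) canonical for $Y_c$. Restricting the linear system $|f_1,\dots,f_r|$ to $Y_c$ splits off the fixed component $Z=X\cap Y_c$, leaving a base-point-free linear system $W_c$; a general $t$-residual intersection is then a complete intersection of $t-c$ general members of $W_c$ inside $Y_c$. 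A Bertini-type argument on a log resolution of $Y_c$ shows a general member of $W_c$ pulls back to a smooth divisor to which adjunction applies, so (log) canonicity descends step by step, and lci is preserved at each step. This two-stage structure — linkage first, then Bertini — is the essential mechanism missing from your proposal, and it is what lets the paper avoid having to compare $\ord_F I_Y$ against $\ord_F I_M-\ord_F I_X$ at all.
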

\begin{proof}We keep using the notations in Construction \ref{def:03}. If a general $t$-residual intersection is empty, then there is nothing to prove. So we assume in the sequel that a general $t$-residual intersection is not empty. Let $Y_c$ be a general link of $X$. Since $\dim X\leq 3$, by \cite[Proposition 2.9]{HunekeUlrich:DivClass}, $Y_c$ is locally a complete intersection and normal. By shrinking $A$ if necessary, we may assume $Y_c$ is irreducible. Note that
	$$\omega_{Y_c}\cong I_X\cdot\sO_{Y_c}=\sO_{Y_c}(-Z)$$
	where $Z=X\cap Y_c$. Hence $\sO_{Y_c}(-Z)$ is invertible on $Y_c$. Let $i:Y_c\rightarrow A$ be the inclusion morphism. Restricting the surjective morphism $v$  of Construction \ref{def:03} to $Y_c$ yields a surjective morphism 
	$$\oplus^r\sO_{Y_c}\stackrel{i^*f_1,\cdots, i^*f_r}{\superlongrightarrow}\sO_{Y_c}(-Z)\longrightarrow 0. $$
	Since $\sO_{Y_c}(-Z)$ is invertible, we see that the linear system $i^*|f_1,\cdots, f_r|$ can be decomposed as
	$$i^*|f_1,\cdots, f_r|=W_c+Z,$$
	where $W_c$ is a free linear system on $Y_c$. Now a general $t$-residual intersection $Y$ can be obtained as the locus of $t-c$ general sections of $W_c$ in $Y_c$.
	
	If $X$ is MJ-log canonical (resp, MJ-canonical), then so is $Y_c$ by the main result of \cite{Niu:MJLinkage}. As $Y_c$ is normal and locally a complete intersection, $Y_c$ has log canonical (resp. canonical) singularities \cite[Remark 2.5]{Ein:SingMJDiscrapency}. Now a Bertini type theorem can be established for a general divisor $H\in W_c$. Indeed, take a log resolution $f:\overline{Y}_c\longrightarrow Y_c$ of $Y_c$. The linear system $f^*W_c$ is base point free on $\overline{Y}_c$. So we take a general member $\overline{H}\in f^*W_c$, which is nonsingular. Accordingly, $\overline{H}$ gives a divisor $H\in W_c$. By the generality of $\overline{H}$, we see that $f^*(H)=\overline{H}$ as Cartier divisors. Now the adjunction formula gives us that 
	$$K_{\overline{H}/H}=K_{\overline{Y}_c/Y_c}|_{\overline{H}}.$$
	Thus $H$ is log canonical (resp. canonical) if so is $Y_c$. Finally a general $t$-residual intersection $Y$ is cut by $t-c$ general members of $W_c$. So by the above Bertini type theorem, we conclude that a general $t$-residual intersection $Y$ is locally a complete intersection with MJ-log canonical (resp. MJ-canonical) singularities, completing the proof.
\end{proof}

Finally, we propose the following conjecture predicting that MJ-singularities are preserved under a general residual intersection.
\begin{conjecture}\label{conj:01}
	Let $Y$ be a general $t$-residual intersection of $X$. Then $Y$ is MJ-canonical (resp, MJ-log canonical) if so is $X$.
\end{conjecture}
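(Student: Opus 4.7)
The plan is to prove the conjecture by induction on $t \geq c$, reducing the statement at step $t+1$ to the statement at step $t$ together with a Bertini-type argument. The base case $t = c$ is the main result of \cite{Niu:MJLinkage}, which asserts that a general link of an MJ-(log) canonical variety is MJ-(log) canonical. Given this, the inductive step amounts to realizing a general $(t+1)$-residual intersection as cut out of a general $t$-residual intersection by an additional general section.

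To execute the inductive step, I would work on the log resolution $\varphi : A_1 \to A$ together with the smooth complete intersection $Y_1 \subset A_1$ from Construction \ref{def:03}. The decomposition $\varphi^*|f_1,\ldots, f_r| = W + E$ produces a base-point-free linear system $W$ on $A_1$, and passing from $t$ to $t+1$ corresponds to intersecting $Y_1$ with one additional general divisor from $W$. Adjunction on the smooth side, combined with Bertini's theorem, would then transfer properties of the resolution of $Y_t$ to a resolution of $Y_{t+1}$. A parallel analysis downstairs, using the Mather discrepancy divisor $\widehat{K}_{Y_1/Y}$, would furnish the MJ-discrepancy comparison needed to propagate the MJ-property.

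For a closed point $y \in Y$ lying outside $X$, the conclusion should be direct: near such $y$, the scheme $Y$ coincides with the complete intersection $M$ defined by $t$ general elements of $I_X$, and Bertini on the smooth ambient $A$ yields local complete intersectionality and the desired singularity control. For a point $y \in X \cap Y$, one would apply Inversion of Adjunction to reduce to proving $\mld(y; A, I_Y^t) \geq 0$ (resp.\ $>0$), and then analyze discrepancies on the tower $\tilde{A} \to A_1 \to A$ of Construction \ref{def:03} via the relations
$$K_{\tilde{A}/A} = (t-1)T + \mu^* K_{A_1/A}, \qquad I_M \cdot \sO_{\tilde{A}} = \sO_{\tilde{A}}(-T - \mu^* E),$$
in the spirit of the proofs of Theorem \ref{local} and Corollary \ref{p:01}.

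The principal obstacle is the transition from the link case $t = c$ to the genuine residual case $t > c$. When $t = c$, the complete intersection $M$ satisfies $\mld(y; A, I_M^c) = \mld(y; A, I_X^c)$ by Theorem \ref{local}, and because $I_M$ and $I_Y$ agree away from $X$ in codimension $c$, the comparison of discrepancies closes cleanly as in Corollary \ref{p:01}. For $t > c$, however, $\ord_F I_Y$ can strictly exceed $\ord_F I_M$ along certain exceptional divisors, and $I_X^t \subseteq I_X^c$ makes the direct inequality $\mld(y; A, I_Y^t) \geq \mld(y; A, I_X^c)$ go in the wrong direction. Moreover, in high dimension the intermediate $Y_t$ need not be normal or locally a complete intersection, so the adjunction argument of Proposition \ref{p:34} does not extend directly. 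Overcoming these difficulties, perhaps by exploiting the formula $\omega_Y^{GR} = \sI(A, I_X^t) \cdot \sO_Y \otimes \omega_A$ of Theorem \ref{p:31} to recast the MJ-discrepancies of $Y$ in terms of multiplier ideals on $A$, appears to be the crux of the conjecture.
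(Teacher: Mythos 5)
The statement you were asked to prove is labeled a \emph{conjecture} in the paper and is genuinely open: the authors offer only Proposition~\ref{p:34} as partial evidence, under the strong extra hypotheses that $X$ is locally a complete intersection of dimension at most~$3$. Under those hypotheses the complete intersection link $Y_c$ is normal and locally a complete intersection, $\omega_{Y_c}\cong I_X\cdot\sO_{Y_c}$ is invertible, and one can peel the Cartier base locus $Z=X\cap Y_c$ off the restricted linear system to obtain a free system $W_c$ on $Y_c$, after which a Bertini/adjunction argument for ordinary log canonical singularities finishes the job. Your sketch is not a proof and you say so yourself; what you actually supply is a correct identification of where the naive argument breaks, which is useful, but a reviewer should not mistake it for a demonstration.

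One detail in your diagnosis is reversed: since $I_M\subseteq I_Y$, every prime divisor $F$ over $A$ satisfies $\ord_F I_Y\leq \ord_F I_M$, so $\ord_F I_Y$ can never \emph{exceed} $\ord_F I_M$. The real problem is that the resulting free inequality $\mld(y;A,I_Y^t)\geq \mld(y;A,I_M^t)$ is too weak: Theorem~\ref{local} gives $\mld(y;A,I_M^\lambda)=\mld(y;A,I_X^\lambda)$ for $0\leq\lambda\leq t$, and taking $\lambda=t>c$ overshoots the exponent $c$ that Inversion of Adjunction pairs with $\mldmj(y;X)\geq 0$, so $\mld(y;A,I_M^t)$ may well be negative. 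Closing the gap requires quantifying, divisor by divisor, how much smaller $I_M$ is than $I_Y$ over the locus $X\cap Y$; this is precisely the information encoded in $\sI(A,I_X^t)$ and Theorem~\ref{p:31}, as you suggest, but nobody has turned that into an MJ-discrepancy bound. Your inductive step also glosses over a second obstruction: the system you would cut with has base locus along $X\cap Y_t$, and for non-LCI, non-normal $Y_t$ there is no analogue of the $\omega_{Y_c}$ argument to split that base locus off as a Cartier divisor, nor is there an off-the-shelf Bertini theorem for MJ-singularities for a linear system with nonempty base locus. These are exactly the reasons the paper leaves the statement as Conjecture~\ref{conj:01} rather than a theorem.
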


\bibliographystyle{amsalpha}

\end{document}